\newtheorem{theorem}{Theorem}[section]
\newtheorem{lemma}{Lemma}[section]
\newtheorem{corollary}{Corollary}[section]
\newtheorem{proposition}{Proposition}[section]
\newtheorem{remark}{Remark}
\newtheorem{assumption}{Assumption}
\newtheorem*{lemmaa}{Lemma A}
\newtheorem*{lemmab}{Lemma B}
\newtheorem*{lemmac}{Lemma C}
\newcommand{\mI}{{\mathcal I}}
\newcommand{\mJ}{{\mathcal J}}
\newcommand{\mK}{{\mathcal K}}
\newcommand{\mQ}{{\mathcal Q}}
\newcommand{\mY}{{\mathcal Y}}
\newcommand{\bN}{{\mathbb N}}
\newcommand{\bR}{{\mathbb R}}
\newcommand{\bZ}{{\mathbb Z}}
\newcommand{\bP}{{\mathbb P}}
\newcommand{\bE}{{\mathbb E}}
\newcommand{\mC}{{\mathcal C}}
\begin{document}
\begin{frontmatter}
\title{Flow level convergence and insensitivity for multi-class queueing networks.}
\runtitle{Convergence and insensitivity for queueing networks}
\author{\fnms{Neil Stuart} \snm{Walton}\ead[label=e1]{n.s.walton@statslab.cam.ac.uk}}
\affiliation{University of Cambridge}
\address{Centre for Mathematical Sciences\\
Wilberforce Road\\
Cambridge\\
CB3 0AJ, U.K.\\
\printead{e1}}
\runauthor{N.S. Walton}
\begin{abstract}
We consider a multi-class queueing network as a model of packet transfer in a communication network. We define a second stochastic model as a model document transfer in a communication network where the documents transferred have a general distribution. 
We prove the weak convergence of the multi-class queueing process to the document transfer process. 
Our convergence result allows the comparison of general document size distributions, and consequently, we prove general insensitivity results for the limit queueing process.
\end{abstract}
\begin{keyword}[class=AMS]
\kwd[Primary ]{60K25}
\kwd{60G17}
\kwd[; secondary ]{90B18}
\end{keyword}
\end{frontmatter}
\section{Introduction}
We present a result that formally demonstrates the separation of timescales between a communication model, where discrete packets are transferred, and a second model, where documents are transferred elastically. Such convergence results are distinct from the fluid and diffusion limit results which are typically applied to queueing processes. The result applies to the transfer of documents with generally distributed sizes and quasi-reversible queues. This extends the convergence proof which was previously applied to the simpler case of exponential file sizes and processor sharing queues \cite{Wa09}. By generalising this result, we can formally prove insensitivity results about the limit queueing system. 

Our prelimit model is a quasi-reversible multi-class queueing network as considered by Baskett et al. \cite{BCMP75}, Kelly \cite{Ke79}. We endow this model with a specific routing structure. Documents for transfer on different routes of the network, arrive as a Poisson process. A document consists of a number of packets which are transferred one-by-one across their route.

Our limit model is a bandwidth sharing model. These stochastic processes model the elastic transfer of documents in a communication network. Bandwidth sharing models were first introduced by Roberts and Massouli\'e \cite{MaRo98}. In this paper, we are particularly interested in bandwidth sharing models associated with multi-class queueing networks. Models of this form were first considered by Bonald and Proutiere \cite{BoPr04}. Bonald and Proutiere demonstrated that these models were insensitive to documents with a phase-type distributions. Our convergence result will allow us to extend this result to documents of any non-atomic distribution.


The formal convergence proof uses a coupling argument. The proof demonstrates weak convergence in the Skorohod topology of the number of documents in transfer of the multi-class queueing network to that of a bandwidth sharing model. The prelimit models considered here are well understood product form queueing networks. Even so such explicit product form results are not required, the arguments used to prove this separation of timescales result are general and could be applied in the analysis of a diverse range of queueing models.

\subsection{An informal description of the results}
We consider a multi-class queueing network. The queueing network processes documents along different routes. Arriving documents are divided into packets which are sent across the network one-by-one. Once all the packets in a document are sent the document departs. 

We could describe the state of this queueing network in several ways: we could consider the explicit behaviour of the network, $Q$, by storing residual document sizes and the location of packets on their routes; or, we could consider the states of packets only $\tilde{Q}$, in doing so, we would ignore information about the residual sizes of documents; or, finally, we could consider the residual document sizes only $Y$, and thus ignore precise information about the positions of packets on their routes. These descriptions form an explicit description, a packet-level description and a flow-level description of our network. 

We are interested in the interactions of a queueing network at these levels. In particular, the rate at which packets are transferred in a modern communication network is often an order of magnitude larger than the time it takes to transfer a document. Thus, given the number of documents in transfer, we should be able to abstract away the packet level behaviour of the network. More formally, conditional on the number of documents in transfer, the quick transition of packets within the network should imply that the distribution of packets with in the network converges quickly to its stationary distribution, and thus the processing of documents is best described by the stationary behaviour of the packet-level queueing network.

We mathematically demonstrate this by taking a sequence of multi-class queueing networks $Q^{(c)}$. Along this sequence, we increase the rate that queues process packets by a factor $c$ and accordingly increase the sizes of documents by a factor $c$. In this regime, packet transitions occur on a time scale of order $O(1/c)$, whilst document transfers remain at a timescale of $O(1)$. Thus in this limit, in between document arrival-departure events, the packet-level state $\tilde{Q}$ will converge to its stationary distribution and document transfers will receive a linear rate of transfer given by the stationary throughput of the packet-level network.

In this paper, we show that in this limit. If we let the initial state and document sizes converge, then the times at which documents arrive and depart the network will converge. The formal result, Theorem \ref{spinning converge} proves that the flow-level state of queueing networks converges in the Skorohod topology to a flow-level model of the network.

With this result we can consequently prove a general insensitivity result for these flow-level networks. In this context, insensitivity means that the stationary distribution of number of documents in transfer only depends on the mean size of documents. We can demonstrate this property for discrete document sizes in the prelimit networks. The Skorohod convergence implies that the stationary distribution for the prelimit network converges to the stationary distribution for the limit network. Consequently in Corollary \ref{Ch2: insensitive coroll}, we demonstrate that the limit queueing network is insensitive amougst all non-atomic document size distributions.

\subsection{Organisation}
In Section \ref{net struct}, we give basic notation used throughout the paper. In Section \ref{multi-class networks}, we review results on some well understood product form queueing networks. In Section \ref{Bandwidth networks}, we define the bandwidth sharing networks which will be the limit of our mutli-class queueing networks. We, also, define what it means for these networks to be insensitive. In Section \ref{sec:spinning conv}, we prove the main convergence result Theorem \ref{spinning converge}. Finally, in Section \ref{Ch2: Insensitivity Sec}, we prove the insensitivity of these queueing networks.

\section{Notation and network structure}\label{net struct}
We let the finite set $\mJ$ index the set of \textit{queues} in a network. Let $J=|\mJ|$. A \textit{route} through the network is a non-empty set of queues. Let $\mI\subset 2^\mJ$ be the set of routes. Let $I=|\mI|$. For each route $i=\{j^i_1,...,j^i_{k_i}\}\in\mI$, we associate an order $(j^i_1,...,j^i_{k_i})$. We allow for queues to be repeated in our route order. For $i\in\mI$ and $j\in i$, we let $\zeta_{ji}\in\bN$ be the number of times queue $j$ is included in ordering $(j^i_1,...,j^i_{k_i})$.  Also we define the set of queue-route incidences, $\mK:=\{(j,i):i\in\mI, j\in\mJ, j\in i\}$ and let $K=|\mK|$. We will view a multi-class queueing network model as transferring a number of documents across the different routes of the network.  The vector $n=(n_i: i\in\mI)\in\bZ_+^I$ will denote the number of documents in transfer across the routes of the network. We also let the vector $m=(m_{ji}:(j,i)\in\mK)\in\bZ_+^K$ refer to the number of packets in transfer across each route at each queue. That is $m_{ji}$ is the number of packets on route $i$ at queue $j$. We also define the number of packets in transfer at a queue to be
\begin{equation*}
m_j:=\sum_{i: j\in i} m_{ji}, \qquad j\in\mJ.
\end{equation*}
For each $n\in\bZ_+^I$, we define $S(n)=\{ m\in \bZ_+^K: \sum_{j:j\in i}m_{ji}=n_i\;\forall\, i\in\mI\}$, that is the set of queue sizes with $n$ documents in transfer on each route.

\section{Multi-class queueing networks}\label{multi-class networks}
In this section, we present some well understood queueing networks that will be studied subsequently. In order to model the transfer of documents across a packet switching network, we define a special case of these queueing networks where packets have a specific routing structure. We then the define closed queueing networks as described in \cite[Section 3.4]{Ke79}. 

\subsection{Multi-class queue}\label{Multi-class queue}
First, we define what we will call a \textit{multi-class queue}. We consider a single queue $j$ from a set of queues $\mJ$. We call the customers of this queue packets. The queue will receive packet arrivals from different classes. The set of classes will consist of a set of packet route choices $\mC$.\footnote{Although for now we choose $\mC$ to be arbitrary, we will later consider $\mC=\{i\in\mI: j\in i\}$, the set of routes using queue $j$.} Packets occupy different positions within a queue. Given there are $m_j\in\bZ_+$ packets at queue $j$ packets may occupy positions $1,2,...,m_j$. Packets of each route at the queue require an independent exponentially distributed service requirement with mean $1$. Given there are $m_j\in\bZ_+$ packets at queue $j$, the total service devoted to packets is given by $\phi_j(m_j)$. We assume $\phi_j(m_j)>0$ if $m_j>0$. This service is then divided amongst packets within the queue.  Given there are $m_j\in\bZ_+$ packets at queue $j$, a proportion $\gamma_j(l,m_j)$ of service is devoted to the packet in position $l\in\{1,...,m_j\}$ of queue $j$. Since $\gamma_j(\cdot,m_j)$ represents a proportion,
\begin{equation*}
\sum_{l=1}^{m_j}\gamma_j(l,m_j)=1,\qquad  m_j\in\bN.
\end{equation*}
Upon completing its service the packet at position $l$ will leave the queue and the packets at positions $l+1,...,m_j$ will move to positions $l,...,m_j-1$, respectively. 

We assume packets of class  $c\in\mC$ will arrive at the queue from  independent Poisson processes of rate $\rho_{jc}$. Given there are $m_j$ packets at the queue an arriving packet will move to position $l\in\{1,...,m_j+1\}$ with probability $\delta_j(l,m_j+1)$. Once again as $\delta_j(\cdot,m_j+1)$ represents a proportion
\begin{equation*}
\sum_{l=1}^{m_j+1}\delta_j(l,m_j+1)=1,\qquad  m_j\in\bZ_+.
\end{equation*}
When a packet arrives at position $l$ the packets in positions $l,...,m_j$ will move to positions $l+1,...,m_j+1$, respectively.

Let $q^j=(c_1^j,...,c_{m_j}^j)\in\mI^{m_j}$, for $m_j>0$, give the state of queue $j$. Let function, $T^c_{\cdot, (j,l)}$ denote the arrival of a class $c$ packet to position $l$ in queue $j$ and let function $T^c_{(j,l),\cdot}$ denote the departure of a class $c$ packet in position $l$. Thus the state of this queue forms a continuous-time Markov chain with transition rates given by,
\begin{equation*}
 r(q^j,q'^j)=
\begin{cases}
 \rho_c \delta_j(l,m_j+1) &\text{for } q'^j=T^c_{\cdot, (j,l)} q^j,\; l=1,...,m_j+1,\; c\in\mC\\
\phi_j(m_j)\gamma_j(l,m_j) & \text{for } q'^j=T^c_{(j,l),\cdot} q^j,\; c^j_l=c,\; l=1,...,m_j,\\
0 & \textit{otherwise}.
\end{cases}
\end{equation*}

The queue itself will not discriminate between different packet's classes and thus the stationary distribution of the queue size will be oblivious to different packets' route type. Ignoring packet classes, when stationary $M_j$ the Markov chain recording the total number of packets at the queue is reversible.
Given $m_j$, routes of the packets in positions $1,2,...,m_j$ are independent.
The probability a packet in a given position is from route $i$ is $\frac{\zeta_{ji}\rho_i}{\sum_{r\ni j} \zeta_{jr}\rho_r}$. 
Thus letting Markov chain $Q_j$ record the position and routes of packets at queue $j$ and letting  $\mQ_j=\cup_{m_j=1}^{\infty} \mI^{m_j}$ gives all possible states of the queue, we can calculate the stationary distribution of the queue.

\begin{proposition}[BCMP \cite{BCMP75}, Kelly \cite{Ke75}]\label{single server queue ed}
A stationary multi-class queue is quasi-reversible and the stationary distribution of $Q_j$ must be
\begin{equation}\label{explicit queue ed}
 \bP_j(Q_j=(c(1),...,c(m_j)))=\frac{1}{B_j} \prod_{l=1}^{m_j}\frac{\rho_{c(l)}}{\phi_j(l)},\qquad  (c(1),...,c(m_j))\in\mQ_j.
\end{equation}
Moreover, for $j\in\mJ$, the process $(M_{jc}:\: i\in\mC)$ giving the number of packets of each route type at queue $j$, has stationary distribution
 \begin{equation}\label{Ch1:queue ed}
 \bP(M_{jc}=m_{jc}, \forall\, c\in\mC )=\frac{1}{B_j}\left(\begin{array}{cc} m_j\\m_{jc}\;:i\in\mC \end{array}\right)\frac{\prod_{c\in\mC}({\rho_{c}})^{m_{jc}}}{\prod_{l=1}^{m_j} \phi_j(l)}
\end{equation}
$\forall\, (m_{jc}:\; c\in \mC)$, where we define 
\begin{equation}\label{combi term}
\left(\begin{array}{cc} m_j\\m_{jc}\;:c\in \mC \end{array}\right)=\frac{m_j!}{\prod_{c\in\mC} (m_{jc}!)}.
\end{equation}
\end{proposition}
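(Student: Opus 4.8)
The plan is to verify directly that the measure $\pi$ given by the right-hand side of \eqref{explicit queue ed}, with $B_j$ the normalising constant (which I assume finite, so that $Q_j$ is positive recurrent), solves the full balance equations and the partial-balance identities characterising quasi-reversibility. Everything reduces to one bookkeeping identity. Write a state with $m$ packets as $n=(c(1),\dots,c(m))$; let $n^{-p}$ be the state obtained by deleting the packet in position $p$ and shifting positions $p+1,\dots,m$ down by one, and let $n^{+p,c}$ be the state obtained by inserting a class-$c$ packet at position $p$. From \eqref{explicit queue ed}, inserting at position $p$ multiplies the product by $\rho_c/\phi_j(p)$ and turns each surviving factor $1/\phi_j(k)$ with $k\ge p$ into $1/\phi_j(k+1)$, so the ratios telescope:
\[
\frac{\pi(n^{+p,c})}{\pi(n)}=\frac{\rho_c}{\phi_j(m+1)},\qquad \frac{\pi(n^{-p})}{\pi(n)}=\frac{\phi_j(m)}{\rho_{c(p)}},
\]
both \emph{independent of the position} $p$. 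This position-independence is the crux.

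Using this, I would compute the probability flux into an arbitrary state $n$ with $m$ packets. The contribution of arrivals, coming from the $(m-1)$-packet states $n^{-p}$, is $\sum_{p=1}^{m}\pi(n^{-p})\rho_{c(p)}\delta_j(p,m)=\pi(n)\phi_j(m)\sum_{p}\delta_j(p,m)=\pi(n)\phi_j(m)$, using $\sum_l\delta_j(l,m)=1$; the contribution of departures, coming from the $(m+1)$-packet states $n^{+p,c}$, is $\sum_{p=1}^{m+1}\sum_{c\in\mC}\pi(n^{+p,c})\phi_j(m+1)\gamma_j(p,m+1)=\pi(n)\sum_{c\in\mC}\rho_c$, using $\sum_l\gamma_j(l,m+1)=1$. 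Their sum is $\pi(n)\big(\sum_{c}\rho_c+\phi_j(m)\big)$, which is exactly the total rate out of $n$; hence $\pi$ (normalised) solves the balance equations, and irreducibility of $Q_j$ on $\mQ_j\cup\{\emptyset\}$ makes it the unique stationary distribution, establishing \eqref{explicit queue ed}.

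For quasi-reversibility I would check the two independence properties for each class $c\in\mC$. First, class-$c$ packets arrive at rate $\rho_c$ irrespective of the state, so the class-$c$ arrival stream is Poisson and $Q_j(t)$ is independent of class-$c$ arrivals after $t$. Second, for departures before $t$ it suffices, passing to the time reversal, to show that the class-$c$ departure flux into any state $n'$ is a state-independent multiple of $\pi(n')$; but for an $m$-packet state $n'$ this flux is $\sum_{p=1}^{m+1}\pi((n')^{+p,c})\phi_j(m+1)\gamma_j(p,m+1)=\rho_c\,\pi(n')$, the single-class version of the computation above. Thus the reversed chain sees forward class-$c$ departures as Poisson arrivals of constant rate $\rho_c$, which gives the second independence property, so $Q_j$ is quasi-reversible with class-$c$ throughput $\rho_c$. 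Finally, \eqref{Ch1:queue ed} follows by summing \eqref{explicit queue ed} over all orderings with prescribed counts $(m_{jc}:c\in\mC)$: there are $m_j!/\prod_{c\in\mC}m_{jc}!$ such orderings, and each contributes $B_j^{-1}\big(\prod_{c\in\mC}\rho_c^{m_{jc}}\big)/\prod_{l=1}^{m_j}\phi_j(l)$, since $\prod_{l=1}^{m_j}\rho_{c(l)}$ depends only on the counts.

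I do not anticipate a serious obstacle, as this is the classical BCMP/Kelly product-form result; the only points requiring care are the re-indexing of positions under insertion and deletion, which produces the telescoping identity above, and invoking the correct partial-balance characterisation of quasi-reversibility through the reversed chain. It is precisely the position-independence of $\pi(n^{+p,c})/\pi(n)$ that makes both the balance equations and the quasi-reversibility identity collapse to the stated form, and there the only property of $\gamma_j$ and $\delta_j$ that is used is that, for each fixed queue length, they are probability distributions over positions, so the position-dependence averages away.
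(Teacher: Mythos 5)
Your proof is correct. Note, however, that the paper does not supply its own proof of this proposition: it is quoted as a known result of BCMP and Kelly, with only the remark that the multinomial coefficient in \eqref{combi term} accounts for forgetting the order of packets. Your direct verification is essentially the classical argument from those references. The crux you identify — that the ratios $\pi(n^{+p,c})/\pi(n)=\rho_c/\phi_j(m+1)$ and $\pi(n^{-p})/\pi(n)=\phi_j(m)/\rho_{c(p)}$ are independent of the position $p$ — is exactly right, and it is what lets the flux-in computation collapse via $\sum_l\delta_j(l,m)=1$ and $\sum_l\gamma_j(l,m+1)=1$ to match the total exit rate $\sum_{c\in\mC}\rho_c+\phi_j(m)$; the same identity yields the constant reversed-process class-$c$ flux $\rho_c\,\pi(n')$, which together with the state-independent Poisson arrivals is the standard partial-balance characterisation of quasi-reversibility (this appeal to the equivalence with the defining independence properties is the one external ingredient you correctly flag). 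Two minor points worth making explicit: the normalisability $B_j<\infty$ is needed for positive recurrence (it is the paper's Assumption \ref{MQNwS assump} specialised to a single queue), and the final passage from \eqref{explicit queue ed} to \eqref{Ch1:queue ed} works precisely because $\prod_{l=1}^{m_j}\rho_{c(l)}$ depends only on the counts $(m_{jc}:c\in\mC)$, as you note. No gaps.
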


The combinatorial term in (\ref{combi term}) is required as the probability distribution (\ref{Ch1:queue ed}) ignores the order of packets within the queue.

\subsection{Multi-class queueing networks}\label{multi spinning}
A \textit{multi-class queueing network with spinning} (MQNwS) is a multi-class network of quasi-reversible queues with the following class routing structure. The class of a packet is of the form $c=(i,k,y)$ where $i\in\mI$ records the route the packet is on, $k\in\bN$ records the stage of the packet on route $i$ and $y\in\bN$ records the packet's residual document size, that is the remaining number of times the packet must traverse its route.
Routing through classes occurs in the following way. If route $i$ has associated route order $(j^i_1,...,j^i_{k_i})$ then as a Poisson process of rate $\nu_i\bP(X_i=x)$ class $(i,1,x)$ packets arrive at queue $j^i_1$. Here $X_i$ is a random variable with values in $\bN$ and with mean $\mu^{-1}_i<\infty$. 
For $k=1,...,k_i-1$, a class $(i,{k},y)$ packet on departing queue $j^i_{k}$ will join queue $j^i_{k+1}$ and become a class $(i,{k+1},y)$ packet.
For a packet that has completed service at the final queue on route $i\in\mI$ and has not been fully processed through the network, that is a packet of class $(i,{k_i},y)$ with $y>1$, the packet will join queue $j^i_1$ as a class $(i,1,y-1)$ packet. For a route $i\in\mI$ packet that has completed its service at the final queue $k_i$ and has been fully processed through the network, that is of class $(i,k_i,1)$, the packet will depart the network. In addition, we let the constant $\zeta_{ji}\in\bZ_+$ give the number of times a packet visits queue $j$ each time it traverses route $i$. Finally, we define traffic intensities $\rho_i=\frac{\nu_i}{\mu_i}$ for each $i\in\mI$.

We can interpret this routing structure in two ways. First, we could consider each packet on route $i$ to arrive as a Poisson process and to repeat its route a number of times that is independent and with distribution equal to $X_i$. This interpretation leads us to think of a packet as spinning around its route a random number of times. Second, we could consider the network to be transferring documents. Documents which require to be transferred across route $i$ arrive as a Poisson process as of rate $\nu_i$. Each document consists of a number of packets, that is independent and with distribution equal to $X_i$. These packets are then sent across the network one by one until the document is transferred. 



For a Markov process description of a MQNwS we record its \textit{explicit state}:  we let $q=(q_j: j\in\mJ)$, where $q_j=(c_j(1),...,c_j(m_j))$ gives the class of each customer in each occupied position in queue $j$. Here the class $c_j(l)=(i_j(l),k_j(l),y_j(l))$ records the route, stage and residual document size associated with the $l$-th packet in queue $j$.  We let $\mQ$ define the set of all possible states for this explicit description of our queueing network. 

Recalling that $m_{ji}$ is the number of route $i$ packets in transfer at queue $j$ and that $n_i$ is the number of route $i$ documents in transfer. As each document has one packet in transfer in the network at any point in time
\begin{equation*}
 n_i=\sum_{j\in\mJ} m_{ji}, \qquad i\in\mI.
\end{equation*}

We define two further descriptions of the state of a MQNwS: the packet level state and the flow level state.

We define the \textit{packet level state} of a multi-class queueing network with spinning to be, $\tilde{q}=(\tilde{q}_j:\; j\in\mJ)$, where $\tilde{q}_j=(\tilde{c}_j(1),...,\tilde{c}_j(m_j))$ and where $\tilde{c}_j(l)=(i_j(l),k_j(l))$ records the route and stage associated with the $l$-th packet in queue $j$.  We let $\tilde{\mQ}$ define the set of all possible packet level states for this description of our queueing network. The packet level state of a MQNwS is concerned with the position and route of packets but not of the state of document transfer. Similarly the flow level state is interested in the state of document transfer and not in the specific position of packets.

We define the \textit{flow level state} of a MQNwS to be, given by vector
\begin{equation*}
 y=( y_{ik} : k=1,...,n_i, i\in\mI )
\end{equation*}
Here, we order elements so that $y_{ik}\leq y_{ik+1}$ for all $k=1,...,n_i-1$. Note, we record no information about each packet's position on its route.
As described above $n_i$ refers to the number of route $i$ documents in transfer on route $i$ and $k$ indexes each specific packet in transfer on route $i$.
  The number $y_{ik}$ is the residual document size of the $k$-th document in route $i$, that is the number of packets yet to be transferred from the document.
We let $\mY$ be the set of flow level states achievable by a MQNwS.

The processes associated with the packet level or flow level state of a multi-class queueing network with spinning need not be Markov. However, these state descriptions will be useful for proving weak convergence results. For this purpose we define on $\mY$ the norm
\begin{equation}\label{Ch2: Y norm}
 ||y-y'||=
\begin{cases}
 \max_{i\in\mI} \max_{k=1,...,n_i} |y_{ik}-y'_{ik} | & \text{if}\quad n_i=n'_i,\quad \forall i\in\mI,\\
\infty &\text{otherwise.}
\end{cases}
\end{equation}

\subsubsection{Stationary behaviour}
We now calculate certain quantities associated with the stationary distribution of a MQNwS. As a direct consequence of known reversibility results \cite[Theorem 3.1]{Ke79}, we can calculate the stationary distribution of a MQNwS.

\begin{theorem}\label{multi-spin ed}
The explicit state of an ergodic multi-class queueing network with spinning has stationary distribution,
\begin{equation}\label{explicit ed}
 \bP(Q=q)=\prod_{j\in\mJ} \frac{1}{B_j} \prod_{l=1}^{m_j}\left(\frac{\nu_{i_j(l)}\bP(X_{i_{j}(l)}\geq y_{j}(l))}{\phi_j(l)}\right),\qquad  q\in\mQ, 
\end{equation}
provided
\begin{equation} \label{B stability cond}
B_j=\sum_{m_j=1}^{\infty} \left(\prod_{l=1}^{m_j}\frac{\sum_{i: j\in i} \zeta_{ji}\rho_i}{\phi_j(l)}\right)<\infty, \qquad \forall j\in\mJ.
\end{equation}
\end{theorem}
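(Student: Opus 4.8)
The plan is to identify the MQNwS with a Markov chain on a network of quasi-reversible queues and then invoke the product-form result for such networks, namely \cite[Theorem 3.1]{Ke79}. First I would verify that the explicit state $Q$ is indeed a continuous-time Markov chain: the transition rates at each queue $j$ are those of the multi-class queue of Section~\ref{Multi-class queue}, with the class set $\mC = \{(i,k,y): i \in \mI,\, j \in i,\, k \text{ a stage of } i \text{ at } j,\, y \in \bN\}$, and the routing between classes is the deterministic ``advance one stage / decrement residual size / depart'' rule described in Section~\ref{multi spinning}. Because Section~\ref{Multi-class queue} established (via Proposition~\ref{single server queue ed}) that each such queue in isolation is quasi-reversible, the hypotheses of Kelly's network theorem are met, so the stationary distribution factorizes over queues as a product of the individual quasi-reversible queue distributions evaluated at the appropriate effective arrival rates.

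The second step is to compute those effective arrival rates by solving the traffic equations for the class-level routing chain. A class $(i,k,y)$ packet is created either by an exogenous arrival (only when $k=1$, at rate $\nu_i\bP(X_i = y)$ entering queue $j^i_1$) or by a transition from class $(i,k-1,y)$ at the previous queue, or — when $k=1$ — from class $(i,k_i,y+1)$ after a full traversal. Chasing this through, the total arrival rate of class-$(i,k,y)$ packets into the queue hosting stage $k$ of route $i$ is $\nu_i \bP(X_i \ge y)$: the telescoping over the ``spinning'' mechanism turns the point mass $\bP(X_i = y)$ into the tail $\bP(X_i \ge y) = \sum_{x \ge y}\bP(X_i = x)$, since a document of size $x$ contributes a packet with residual size $y$ for every $y \le x$. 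Substituting $\rho_{c(l)} = \nu_{i_j(l)}\bP(X_{i_j(l)} \ge y_j(l))$ into the single-queue formula \eqref{explicit queue ed} and taking the product over $j \in \mJ$ yields \eqref{explicit ed}.

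The third step is the normalization and stability claim. Each queue's partition function is $B_j = \sum_{m_j \ge 1}\prod_{l=1}^{m_j} \big(\sum_{\text{classes } c \text{ at } j} \rho_c\big)/\phi_j(l)$, and one must check that $\sum_{\text{classes } c} \rho_c$, summed over all $(i,k,y)$ with stage $k$ of route $i$ at queue $j$, collapses to $\sum_{i: j \in i}\zeta_{ji}\rho_i$ — here $\sum_y \bP(X_i \ge y) = \bE[X_i] = \mu_i^{-1}$ contributes the factor $\nu_i \mu_i^{-1} = \rho_i$, and $\zeta_{ji}$ counts the number of stages of route $i$ that sit at queue $j$. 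This gives exactly condition~\eqref{B stability cond}; finiteness of every $B_j$ is then equivalent to positive recurrence of the chain, by the standard argument that the product measure \eqref{explicit ed} is summable iff each factor sums.

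The main obstacle is the bookkeeping in the traffic-equation step: the class space is countably infinite (indexed by residual size $y$), so one must be careful that Kelly's network theorem applies in this setting and that the sum defining each $B_j$ genuinely telescopes as claimed. I would handle this by first treating a truncated model with document sizes bounded by $N$ (finitely many classes), applying \cite[Theorem 3.1]{Ke79} there, and then passing to the limit $N \to \infty$, checking that the tail-sum identity $\sum_{y \le N}\bP(X_i \ge y) \to \bE[X_i]$ makes the truncated partition functions converge to the stated $B_j$; monotone convergence makes this routine once the combinatorial identification of the arrival rates is in hand.
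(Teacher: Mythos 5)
Your proposal is correct and follows essentially the same route as the paper's proof: identify the MQNwS as a network of quasi-reversible queues with deterministic class routing, invoke Kelly's network theorem, and verify that $\beta_{j,(i,k,y)}=\nu_i\bP(X_i\geq y)$ solves the traffic equations via the telescoping identity $\nu_i\bP(X_i=y)+\nu_i\bP(X_i\geq y+1)=\nu_i\bP(X_i\geq y)$. Your additional checks (that the aggregate class rate at queue $j$ collapses to $\sum_{i:j\in i}\zeta_{ji}\rho_i$, and the truncation argument for the countably infinite class space) go slightly beyond what the paper writes out, but they support the same argument rather than constituting a different one.
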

\begin{proof} 
A multi-class queueing network with spinning is a network of quasi-reversible queues with a deterministic routing structure. It is known, \cite[Theorem 3.1]{Ke79}, that a network of quasi-reversible queues has a stationary distribution 
\begin{equation*}
\bP(Q=q)= \prod_{j\in\mJ} \bP(Q_j=q_j)
\end{equation*} 
where,
\begin{equation*}
\bP(Q_j=q_j)=\frac{1}{B_j} \prod_{l=1}^{m_j} \frac{\beta_{jc_j(l)}}{\phi_j(l)},\qquad j\in\mJ,
\end{equation*}
and where $\beta_{jc}$ solves the traffic equations
\begin{equation*}
\beta_{jc}=\nu_{jc}+ \sum_{l,d} \beta_{l,d} p_{ld,jc},\qquad j\in\mJ, c\in\mC.
\end{equation*}
Here $\nu_{jc}$ is the arrival rate of class $c$ customers at queue $j$ and $p_{jc,ld}$ gives the packet routing probabilities, which in our case are, for $c=(i,k,y)$,
\begin{align*}
p_{jc,ld}=
\begin{cases}
1 & \text{if } k < k_i,\; d=(i,k+1,y)\text{ and } l= j_{k+1}^i,\\
1 & \text{if } k=k_i,\; d=(i,1,y-1),\; y>0\text{ and }l=j^i_1,\\
1 & \text{if } y=1,\; k=k_i,\; (l,d)=\cdot,\\
0 & \text{otherwise}.
\end{cases}
\end{align*}
In this way, packets are transferred between queues, the next packet is injected at the ingress and document departures occur. 

So, all that is needed is to verify that $\tilde{\beta}_{j,c}$ solves the traffic equations along our deterministic path. Observe that, for $k>1$, $\tilde{\beta}_{j,(i,k,y)}=\tilde{\beta}_{j,(i,k-1,y)}$ and, for $k=1$
\begin{align*}
\tilde{\beta}_{j,(i,1,y)}=\nu_i\bP(X_i\geq y_i) &= \nu_i \bP(X_i=y) + \nu_i\bP(X_i\geq y-1)\\
&= \nu_{i,(i,1,y)}+ \tilde{\beta}_{j,(i,k_i,y+1)}.
\end{align*}
 This verifies the traffic equations are satisfied and hence gives the result. 

\end{proof}
The condition (\ref{B stability cond}) is the necessary and sufficient for a multi-class queueing network with spinning to be ergodic and thus is equivalent to the assumptions ergodicity in subsequent results. We encapsulate this in the following assumption:

\begin{assumption} \label{MQNwS assump}
Unless stated otherwise we assume a multi-class queueing network with spinning satisfies the following necessary and sufficient condition for ergodicity:
\begin{equation} \label{B stability cond 1}
B_j=\sum_{m_j=1}^{\infty} \left(\prod_{l=1}^{m_j}\frac{\sum_{i: j\in i} \zeta_{ji}\rho_i}{\phi_j(l)}\right)<\infty, \qquad \forall j\in\mJ.
\end{equation}
\end{assumption}

The following three corollaries are a consequence of Theorem \ref{multi-spin ed}. Each of these results require summing over an appropriate set of states. For example, from Theorem \ref{multi-spin ed}, we can calculate the stationary distribution of the number of packets in transfer along each route at each queue.
\begin{corollary}\label{CH1: M ed thrm}
Given the stability condition, Assumption \ref{MQNwS assump}, $M=(M_{ji}:(j,i)\in \mK)$, the number of packets in transfer across each route at each queue, has stationary distribution,
\begin{equation}\label{Ch2:ed}
\bP(M=m)=\prod_{j\in\mJ}\left(\frac{1}{B_j}\left(\begin{array}{cc} m_j\\m_{ji}\;:i\ni j \end{array}\right)\frac{\prod_{i: j\in i}(\zeta_{ji}\rho_i)^{m_{ji}}}{\prod_{l=1}^{m_j}\phi_j(l)}\right)
\end{equation}
\end{corollary}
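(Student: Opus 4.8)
The plan is to derive the distribution of $M = (M_{ji} : (j,i) \in \mathcal{K})$ from Theorem~\ref{multi-spin ed} by summing the explicit stationary distribution \eqref{explicit ed} over all explicit states $q$ consistent with a fixed packet count vector $m$. I would proceed by a change of variables in two stages: first pass from the explicit state $q$ (positions, routes, stages, and residual document sizes) to the data $(m, \text{routes}, \text{residual sizes})$, then marginalise further to $m$ alone.

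First I would fix $m = (m_{ji})$ and observe that, since the network is a product over queues $j \in \mathcal{J}$ (by Theorem~\ref{multi-spin ed}), it suffices to work one queue at a time: I want $\bP(M_j = (m_{ji} : i \ni j))$ for each $j$ and then multiply. At queue $j$, summing \eqref{explicit ed} over all explicit states $q_j$ with exactly $m_{ji}$ packets of route $i$ (for each $i \ni j$, so $m_j = \sum_{i \ni j} m_{ji}$) involves three independent combinatorial/summation tasks, which I would handle as follows. (i) The factor $\prod_{l=1}^{m_j} \phi_j(l)^{-1}$ depends only on $m_j$ and factors out. (ii) For the ordering of packets in the $m_j$ occupied positions, a packet in a given position may be any of the $m_j$ packets; the number of ways to assign which positions hold route-$i$ packets is the multinomial coefficient $\binom{m_j}{m_{ji} : i \ni j}$ from \eqref{combi term}. (iii) The numerator $\prod_{l=1}^{m_j} \nu_{i_j(l)} \bP(X_{i_j(l)} \ge y_j(l))$ splits as $\prod_l \nu_{i_j(l)} \cdot \prod_l \bP(X_{i_j(l)} \ge y_j(l))$; the first product equals $\prod_{i \ni j} \nu_i^{m_{ji}}$, while the second must be summed over all admissible residual-size data $(y_j(l))$ and over the stage labels $k_j(l)$.

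The key step — and the main obstacle — is the summation over residual document sizes and stages in (iii). For each packet, the residual size $y$ ranges over $\bN$ (i.e.\ $\{1, 2, \dots\}$) and contributes $\bP(X_i \ge y)$, so summing a single route-$i$ packet's residual-size contribution gives $\sum_{y \ge 1} \bP(X_i \ge y) = \bE[X_i] = \mu_i^{-1}$. Thus the residual-size sum over all $m_{ji}$ route-$i$ packets at queue $j$ contributes $\mu_i^{-m_{ji}}$. As for the stage index $k_j(l) \in \{1, \dots, k_i\}$: a route-$i$ packet can be at any of the $k_i$ stages, but it is the \emph{visit count} $\zeta_{ji}$ that records how many of those stages correspond to queue $j$, so summing over which stage a given route-$i$ packet at queue $j$ is in contributes a factor $\zeta_{ji}$ per packet, i.e.\ $\zeta_{ji}^{m_{ji}}$ in total. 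Combining: the numerator contribution for route $i$ at queue $j$ is $\nu_i^{m_{ji}} \cdot \mu_i^{-m_{ji}} \cdot \zeta_{ji}^{m_{ji}} = (\zeta_{ji}\rho_i)^{m_{ji}}$, using $\rho_i = \nu_i/\mu_i$. I would need to be careful that these three sums genuinely decouple across packets and across $(i,y,k)$, which follows because \eqref{explicit ed} is a product over positions $l$ with no cross terms, and because the constraint only fixes the \emph{counts} $m_{ji}$, leaving the per-packet labels free. Assembling the pieces — $B_j^{-1}$, the multinomial coefficient, $\prod_{i \ni j}(\zeta_{ji}\rho_i)^{m_{ji}}$, and $\prod_{l=1}^{m_j}\phi_j(l)^{-1}$ — and taking the product over $j \in \mathcal{J}$ yields exactly \eqref{Ch2:ed}. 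A final sanity check is that the normalisation is consistent: summing \eqref{Ch2:ed} over all $m$ reduces, via the multinomial theorem at each queue, to $\prod_j B_j^{-1} \sum_{m_j \ge 0} \big(\sum_{i \ni j}\zeta_{ji}\rho_i\big)^{m_j} / \prod_{l=1}^{m_j}\phi_j(l)$, which equals $1$ precisely under Assumption~\ref{MQNwS assump}.
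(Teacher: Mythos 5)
Your proposal is correct and follows essentially the same route as the paper: both exploit the product form over queues from Theorem \ref{multi-spin ed}, aggregate the stage and residual-size labels via $\sum_{k:j_k^i=j}\sum_{y\geq 1}\nu_i\bP(X_i\geq y)=\zeta_{ji}\rho_i$, and use the multinomial count to forget packet positions. The only (presentational) difference is that the paper packages the aggregation as a Poisson arrival rate for route-$i$ packets at each isolated queue and then invokes the single-queue formula \eqref{Ch1:queue ed}, whereas you carry out the marginal summation of \eqref{explicit ed} directly.
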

\begin{proof}
We know from Theorem \ref{multi-spin ed} that our queueing network has a stationary distribution equal to that of a simpler queueing network. In this simpler queueing network, each queue $j$ behaves independently in isolation and where class $c=(i,k,y)$ packets, with $j^i_k=j$, arrive at queue j as a Poisson process of rate $\nu_i\bP(X_i\geq y)$. Let us work with this simpler but equivalent queueing model. In this model, route $i$ packets arrive into queue $j$ as a Poisson process of rate
\begin{equation*}
 \sum_{k: j_k^i=j}\sum_{y=1}^{\infty} \nu_i \bP(X_i\geq y) = \zeta_{ji}\rho_i.
\end{equation*}
So queue $j$ will have an independent stationary distribution exactly of the form of (\ref{explicit queue ed}). So, as in Section \ref{Multi-class queue}, by ignoring packet positions we can gain distribution (\ref{Ch1:queue ed}) for each queue. Thus by independence equation (\ref{Ch2:ed}) holds for the network.
\end{proof}
\begin{remark}
Observe that distribution (\ref{Ch2:ed}) only depends on the distribution of $X_i$ through its mean $\frac{1}{\mu_i}$. Thus the stationary distribution of a MQNwS only depends on the distribution of document sizes through their mean size. This suggests a form of insensitivity holds. This point is noted by Massouli\'e and in the thesis of Proutiere \cite{Pr03}. Similar observations are made earlier in \cite{Ke79} for individual queues. We will discuss this observation in more detail in Chapter 3.
\end{remark}

We can express the stationary distribution of the number of documents in transfer on each route, $N=(N_i:i\in\mI)$. We define $\mQ(n)$ and $\tilde{\mQ}(n)$ be the set of explicit states and packet level states a MQNwS that occur with positive probability given there are $n\in\bZ_+^I$ documents in transfer on each route. We also let $S(n)=\{m\in\bZ_+^K: \sum_{j:j\in i} m_{ji}=n_i,\;\; \forall\, i\in\mI \}$ be the set of route-queue states achievable given there are $n\in\bZ_+^I$ documents in transfer.
\begin{corollary}\label{CH1: N ed thrm}
Given the stability condition Assumption \ref{MQNwS assump} is satisfied. For a MQNwS, $N=(N_i:i\in\mI)$ the number of documents in transfer on each route has stationary distribution
\begin{equation}\label{ed N}
\bP(N=n)=\frac{B_n}{B} \prod_{i\in\mI} \rho_i^{n_i},\qquad n\in\bZ_+^I,
\end{equation}
where we define
\begin{align}
&\qquad \qquad B:=\prod_{j\in\mJ} B_j,\qquad & \label{B}\\
B_n&:=\sum_{m\in S(n)}\prod_{j\in\mJ}\left(\left(\begin{array}{cc} m_j\\m_{ji}\;:i\ni j
\end{array}\right)\left(\frac{\prod_{i: j \in i}\zeta^{m_{ji}}_{ji} }{\prod_{l=1}^{m_j}\phi_j(l) }\right)\right),\qquad &\;n\in\bZ_+^I.\label{bn}
\end{align}
\end{corollary}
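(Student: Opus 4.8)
The plan is to read off (\ref{ed N}) directly from Corollary \ref{CH1: M ed thrm} by summing the stationary law of $M$ over the fibre $S(n)$. Since each document has exactly one packet in transfer, $n_i=\sum_{j\in\mJ}m_{ji}$, so the event $\{N=n\}$ coincides with $\{M\in S(n)\}$ and
\begin{equation*}
\bP(N=n)=\sum_{m\in S(n)}\prod_{j\in\mJ}\left(\frac{1}{B_j}\left(\begin{array}{c} m_j\\ m_{ji}\;:i\ni j \end{array}\right)\frac{\prod_{i: j\in i}(\zeta_{ji}\rho_i)^{m_{ji}}}{\prod_{l=1}^{m_j}\phi_j(l)}\right).
\end{equation*}

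First I would split $(\zeta_{ji}\rho_i)^{m_{ji}}=\zeta_{ji}^{m_{ji}}\rho_i^{m_{ji}}$ and collect the powers of $\rho_i$ over all queues. The one observation that makes everything work is that, for every $m\in S(n)$,
\begin{equation*}
\prod_{j\in\mJ}\prod_{i: j\in i}\rho_i^{m_{ji}}=\prod_{i\in\mI}\rho_i^{\sum_{j:j\in i}m_{ji}}=\prod_{i\in\mI}\rho_i^{n_i},
\end{equation*}
which depends only on $n$ and not on the particular state $m$ in the fibre; hence it, together with the constant $\prod_{j\in\mJ}B_j^{-1}=B^{-1}$ from (\ref{B}), factors out of the sum.

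What remains inside the sum is then exactly
\begin{equation*}
\sum_{m\in S(n)}\prod_{j\in\mJ}\left(\left(\begin{array}{c} m_j\\ m_{ji}\;:i\ni j \end{array}\right)\frac{\prod_{i: j\in i}\zeta_{ji}^{m_{ji}}}{\prod_{l=1}^{m_j}\phi_j(l)}\right)=B_n
\end{equation*}
by the definition (\ref{bn}), and assembling the three pieces yields $\bP(N=n)=(B_n/B)\prod_{i\in\mI}\rho_i^{n_i}$. There is no real obstacle here, the argument being a direct summation; the only point needing care is the regrouping of the product over queue-route incidences $(j,i)\in\mK$ into a product over routes using the constraint defining $S(n)$, and checking that the combinatorial and service factors left behind are precisely those collected by the definition of $B_n$. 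It is worth noting that Assumption \ref{MQNwS assump}, i.e.\ $B<\infty$, justifies the rearrangement and forces $\sum_{n\in\bZ_+^I}B_n\prod_{i\in\mI}\rho_i^{n_i}=B$, so that (\ref{ed N}) is genuinely a probability distribution.
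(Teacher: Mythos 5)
Your proposal is correct and is precisely the calculation the paper omits: it states that the corollary follows by "summing distribution \eqref{explicit ed} over the specified set of states," and summing the law of $M$ from Corollary \ref{CH1: M ed thrm} over the fibre $S(n)$, factoring out $\prod_{i\in\mI}\rho_i^{n_i}$ via the constraint $\sum_{j:j\in i}m_{ji}=n_i$, and recognising the remainder as $B_n$ is exactly that computation. Your closing observation that Assumption \ref{MQNwS assump} forces $\sum_n B_n\prod_i\rho_i^{n_i}=B$ (by the multinomial theorem applied queue by queue) is a correct and worthwhile sanity check.
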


Finally, we give the stationary distribution for the packet level state of a MQNwS, $\tilde{Q}=(\tilde{Q}_j:j\in\mJ)$. And, we also give the stationary distribution the flow level state of a MQNwS, $Y=(Y_{ik}:\; k=1,...,N_i,\; i\in\mI)$.

\begin{corollary}\label{Ch2: packet flow corol}
Given the stability condition Assumption \ref{MQNwS assump} is satisfied, the stationary packet level state of a MQNwS, $\tilde{Q}=(\tilde{Q}_j:j\in\mJ)$, has distribution
\begin{equation*}
 \bP(\tilde{Q}=\tilde{q})=\prod_{j\in\mJ} \frac{1}{B_j} \prod_{l=1}^{m_j}\frac{\rho_{i_j(l)}}{\phi_j(l)},\qquad  \tilde{q}\in\tilde{\mQ}. 
\end{equation*}
The stationary flow level state of a MQNwS, $Y=(Y_{ik}:\; k=1,...,N_i,\; i\in\mI)$, has distribution
\begin{equation}\label{Ch2:ed Y}
\bP(Y=y)=\frac{B_n}{B}\prod_{i\in\mI}\left( \begin{array}{cc} n_i \\ n_{iy}\: : \: y\in\bN \end{array}\right) \prod_{y\in\bN} \big(\nu_i \bP(X_i\geq y_{ik})\big)^{n_{iy}},\qquad  y\in\mY.
\end{equation}
Here $n_{iy}$ is the number of route $i$ packets with residual file size $y$. Also we define,
\begin{equation*}
 \left( \begin{array}{cc} n_i \\ n_{iy}\: : \: y\in\bN \end{array}\right):= \frac{n_i!}{ \prod_{y\in\bN} (n_{iy}!)}.
\end{equation*}
\end{corollary}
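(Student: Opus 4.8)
The plan is to obtain both stationary laws by summing the explicit distribution (\ref{explicit ed}) of Theorem \ref{multi-spin ed} over the appropriate fibres, since $\tilde{Q}$ and $Y$ are each a deterministic function of the explicit state $Q$: thus $\bP(\tilde{Q}=\tilde{q})=\sum_{q:\,\tilde{Q}(q)=\tilde{q}}\bP(Q=q)$ and $\bP(Y=y)=\sum_{q:\,Y(q)=y}\bP(Q=q)$.

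The packet level case is immediate. Given $\tilde{q}\in\tilde{\mQ}$, an explicit state mapping to it is obtained by attaching to each occupied position $(j,l)$ an arbitrary residual size $y_j(l)\in\bN$, with no constraint beyond $y_j(l)\geq1$. Hence the sum over such states factorises position-by-position in (\ref{explicit ed}); since $\sum_{y\geq1}\bP(X_i\geq y)=\bE X_i=\mu_i^{-1}$ and $\rho_i=\nu_i/\mu_i$, each factor $\nu_{i_j(l)}\bP(X_{i_j(l)}\geq y_j(l))$ collapses to $\rho_{i_j(l)}$, which yields the stated formula for $\bP(\tilde{Q}=\tilde{q})$.

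For the flow level case I would first note that, for each $j$, the summand of (\ref{explicit ed}) depends on $q_j$ only through $m_j$ and through the multiset of (route, residual size) pairs carried by the packets at queue $j$; it does not see the stages of those packets nor the order in which they are stored. Fixing $y\in\mY$, let $n_{iy}$ be the number of route $i$ documents with residual size $y$ and $n_i=\sum_y n_{iy}$. I would group the fibre $\{q:Y(q)=y\}$ by the profile $(a_{jiy})$, where $a_{jiy}$ is the number of route $i$ packets of residual size $y$ sitting at queue $j$; these must satisfy $\sum_j a_{jiy}=n_{iy}$ for every $(i,y)$. Writing $m_{ji}=\sum_y a_{jiy}$ and $m_j=\sum_i m_{ji}$, a direct count shows that each profile is realised by exactly $\prod_j\bigl(\binom{m_j}{(a_{jiy})_{i,y}}\prod_{i:j\in i}\zeta_{ji}^{m_{ji}}\bigr)$ explicit states: choose which positions at queue $j$ carry which (route, residual) pair, then assign one of the $\zeta_{ji}$ admissible stages to each route $i$ packet there. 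Since $\prod_j\prod_{i,y}(\nu_i\bP(X_i\geq y))^{a_{jiy}}=\prod_{i,y}(\nu_i\bP(X_i\geq y))^{n_{iy}}$ pulls out of the sum, this reduces the claim to the identity
\[
\sum_{(a_{jiy})}\ \prod_{j\in\mJ}\frac{1}{B_j}\,\frac{\binom{m_j}{(a_{jiy})_{i,y}}\,\prod_{i:j\in i}\zeta_{ji}^{m_{ji}}}{\prod_{l=1}^{m_j}\phi_j(l)}\ =\ \frac{B_n}{B}\prod_{i\in\mI}\binom{n_i}{(n_{iy})_y}.
\]

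To prove this identity I would use $\binom{m_j}{(a_{jiy})_{i,y}}=\binom{m_j}{(m_{ji})_i}\prod_i\binom{m_{ji}}{(a_{jiy})_y}$ to split the sum over profiles into an outer sum over $m\in S(n)$ and, for each fixed $m$, an inner sum over the $a_{jiy}$ with prescribed row sums $\sum_y a_{jiy}=m_{ji}$ and column sums $\sum_j a_{jiy}=n_{iy}$. The inner sum factorises over routes, and for fixed $i$ it equals the coefficient of $\prod_y x_y^{n_{iy}}$ in $\prod_{j\in i}\bigl(\sum_y x_y\bigr)^{m_{ji}}=\bigl(\sum_y x_y\bigr)^{n_i}$, namely $\binom{n_i}{(n_{iy})_y}$, independently of $m$. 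Pulling this constant out leaves $\sum_{m\in S(n)}\prod_j B_j^{-1}\binom{m_j}{(m_{ji})_i}\prod_{i:j\in i}\zeta_{ji}^{m_{ji}}/\prod_l\phi_j(l)$, which equals $B_n/B$ by the definitions (\ref{B})--(\ref{bn}); combining with the factored-out term gives (\ref{Ch2:ed Y}). The main obstacle I anticipate is exactly this bookkeeping: getting the per-profile multiplicity right and recognising the contingency-table sum as the single multinomial coefficient $\binom{n_i}{(n_{iy})_y}$; the rest is substitution into Theorem \ref{multi-spin ed}. As a sanity check, summing (\ref{Ch2:ed Y}) over all flow states with a fixed document vector $n$ should recover $\bP(N=n)$ of (\ref{ed N}) by the multinomial theorem.
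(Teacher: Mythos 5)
Your proposal is correct and follows exactly the route the paper indicates — summing the explicit stationary distribution (\ref{explicit ed}) over the fibre of states consistent with the packet-level or flow-level description — except that the paper omits the calculation entirely, whereas you supply it. Your bookkeeping (the per-profile multiplicity $\prod_j\binom{m_j}{(a_{jiy})_{i,y}}\prod_{i:j\in i}\zeta_{ji}^{m_{ji}}$, the contingency-table sum collapsing to $\prod_i\binom{n_i}{(n_{iy})_y}$, and the outer sum over $S(n)$ giving $B_n/B$) is sound, and your sanity check against (\ref{ed N}) confirms it.
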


The above two corollaries simply involve summing distribution \eqref{explicit ed} over the specified set of states. We omit the explicit calculation in their proof.

\subsection{Closed multi-class queueing network}\label{closed queue}
A \textit{closed multi-class queueing network} behaves as an MQNwS except that document arrivals and departures are forbidden, see \cite[Section 3.4]{Ke79}. In effect the network behaves as if there are a fixed number of infinitely large documents in transfer. We now more formally define a closed multi-class queueing network.

Given there are $n\in\bZ^I_+$ packets on each route, a closed multi-class queueing network is a packet level Markov process on the states $\tilde{Q}(n)$ 
We now define the class and routing structure of this queueing network. The class of a packet is of the form $\tilde{c}=(i,k)$ where $i\in\mI$ records the route a packet is on and $k\in\bN$ records the stage of the packet on its route $i$. 
Routing through classes occurs in the following way. For $k=1,...,k_i-1$, a class $(i,{k})$ packet on departing queue $j^i_{k}$ will join queue $j^i_{k+1}$ and become a class $(i,{k+1})$ packet. 
A class $(i,{k_i})$ packet that has completed service will join queue $j^i_1$ as a class $(i,1)$ packet. 

This description is sufficient to give a Markov chain description of a closed multi-class queueing network, but is not sufficient for this Markov chain to be irreducible. For example, a network consisting of a single last-come-first-served queue would reducible. For this reason, we require the following assumption to hold throughout this paper.

\begin{assumption}\label{assump}
We assume for all closed queueing networks in this thesis that the set of states $\tilde{\mQ}(n)$ is irreducible.
\end{assumption}

\noindent It is worth noting that if Assumption \ref{assump} is broken then there need not be a unique stationary distribution or a unique stationary throughput for the closed queueing network. Note due to the finite state space of these Markov chains we do not require any stability condition to hold.

As is proven in Section 3.4 of \cite{Ke79}, we now give the stationary distribution for this queueing network.

\begin{corollary}\label{closed corol}
Given Assumption \ref{assump}, for a closed multi-class queueing network with $n\in\bZ_+^I$ documents in transfer across routes, the number of packets in transfer on each route at each queue has stationary distribution
\begin{equation}\label{closed ed}
\bP_n(M=m)=\frac{1}{B_n}\prod_{j\in\mJ}\left(\left(\begin{array}{cc} m_j\\m_{ji}\;:i\ni j
\end{array}\right)\left(\frac{\prod_{i: j \in i}\zeta^{m_{ji}}_{ji} }{\prod_{k=1}^{m_j}\phi_j(k) }\right)\right),
\end{equation}
for each $m\in S(n)$, where $B_n$ is defined by (\ref{bn}).
\end{corollary}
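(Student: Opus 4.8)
The plan is to invoke the classical product-form theorem for closed networks of quasi-reversible queues, \cite[Section 3.4]{Ke79}, and then carry out the combinatorial reduction to the route-count vector $m$. By construction, a closed multi-class queueing network is a closed network of the quasi-reversible queues introduced in Section \ref{Multi-class queue}, with the purely deterministic class-routing $(i,1)\to(i,2)\to\cdots\to(i,k_i)\to(i,1)$ on each route $i$. Since Assumption \ref{assump} guarantees that $\tilde{\mQ}(n)$ is irreducible, the closed-network theorem of \cite[Section 3.4]{Ke79} gives that the packet-level chain $\tilde{Q}$ on $\tilde{\mQ}(n)$ has a product-form stationary distribution
\begin{equation*}
\bP_n(\tilde{Q}=\tilde{q})=\frac{1}{G(n)}\prod_{j\in\mJ}\prod_{l=1}^{m_j}\frac{\alpha_{j,\tilde{c}_j(l)}}{\phi_j(l)},\qquad \tilde{q}\in\tilde{\mQ}(n),
\end{equation*}
where $G(n)$ is a normalising constant over $\tilde{\mQ}(n)$ and $(\alpha_{jc})$ solves the closed traffic equations $\alpha_{jc}=\sum_{l,d}\alpha_{ld}p_{ld,jc}$, with $p$ the packet-routing probabilities from the proof of Theorem \ref{multi-spin ed} restricted to the deterministic cycle. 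Because the class transitions on each route $i$ form a single deterministic cycle through the stages $1,\dots,k_i$, these equations force $\alpha_{j,(i,k)}$ to be constant over all stages $k$ and all queues $j$ on route $i$, with one free multiplicative constant per route; I would simply take $\alpha_{j,(i,k)}\equiv 1$.

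Next I would sum this product form over all explicit states $\tilde{q}$ consistent with a fixed $m\in S(n)$. Fix a queue $j$ and route counts $(m_{ji})_{i\ni j}$ with $\sum_{i\ni j}m_{ji}=m_j$. An admissible $\tilde{q}_j$ is specified by first choosing which of the $m_j$ positions carry which route, contributing the multinomial coefficient $\left(\begin{array}{cc} m_j\\m_{ji}\;:i\ni j \end{array}\right)$, and then, for each route-$i$ packet, choosing which of the $\zeta_{ji}$ occurrences of queue $j$ in the order of route $i$ it currently sits at, contributing $\prod_{i:j\in i}\zeta_{ji}^{m_{ji}}$; on every such state $\prod_{l=1}^{m_j}\alpha_{j,\tilde{c}_j(l)}=1$ by the previous step. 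Since these choices are made independently across queues, summing over this set yields
\begin{equation*}
\bP_n(M=m)=\frac{1}{G(n)}\prod_{j\in\mJ}\left(\left(\begin{array}{cc} m_j\\m_{ji}\;:i\ni j \end{array}\right)\frac{\prod_{i:j\in i}\zeta_{ji}^{m_{ji}}}{\prod_{l=1}^{m_j}\phi_j(l)}\right),\qquad m\in S(n).
\end{equation*}
Summing the right-hand side over $m\in S(n)$ and comparing with \eqref{bn} identifies $G(n)=B_n$, which is the assertion.

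The only mildly delicate point is the bookkeeping in the counting step: one must keep the route multiplicities $\zeta_{ji}$ (a queue may appear several times in a route's order, each occurrence being a distinct stage-class) separate from the positional multinomial, and one must check that the per-route constants from the traffic-equation solution genuinely cancel against the normalisation — which they do precisely because $\sum_{j\in\mJ}m_{ji}=n_i$ is fixed on $S(n)$, so any choice $\alpha_{j,(i,k)}\equiv a_i$ only introduces the $S(n)$-constant factor $\prod_{i\in\mI}a_i^{n_i}$. Equivalently, and perhaps more quickly, the result can be read off Corollary \ref{CH1: M ed thrm}: for networks of quasi-reversible queues the open stationary law, restricted and renormalised to a fixed document-count level $\{M\in S(n)\}$, is stationary for the closed network, and the $\prod_{i\in\mI}\rho_i^{n_i}$ factor in \eqref{Ch2:ed} then cancels, leaving exactly \eqref{closed ed}.
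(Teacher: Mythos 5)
Your proof is correct and follows exactly the route the paper intends: the paper itself gives no argument beyond citing \cite[Section 3.4]{Ke79}, and your combinatorial reduction (constant solution of the closed traffic equations along the deterministic cycle, then the multinomial count times $\prod_{i:j\in i}\zeta_{ji}^{m_{ji}}$ for stage choices) is the same summation the paper performs for Corollary \ref{CH1: M ed thrm}. Nothing is missing.
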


Finally, we can characterise the stationary throughput of packets in a closed multi-class queueing networks.

\begin{corollary}\label{throughput}
Given Assumption \ref{assump}, for a closed multi-class queueing network with $n\in\bZ_+^I$ documents in transfer across routes and with $n_i>0$, the stationary throughput of each route $i$ packet, at stage $k$ and at queue $j=j^i_k$ is
\begin{equation*}
\frac{1}{n_i}\frac{B_{n-e_i}}{B_n},
\end{equation*}
where $B_n$ is defined by (\ref{bn}) and $e_i$ is the $i$-th unit vector in $\bR_+^I$. 
\end{corollary}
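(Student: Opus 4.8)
The plan is to compute the stationary throughput directly from the product-form stationary distribution of the closed network, Corollary~\ref{closed corol}. Fix a route $i$ with $n_i>0$ and a stage $k$, and write $j=j^i_k$. The throughput of stage-$k$ route-$i$ packets is the long-run rate at which service completions of such packets occur, which by the ergodic theorem and the PASTA-type argument for service completions equals the stationary expectation of the instantaneous completion rate. A stage-$k$ route-$i$ packet at queue $j$ occupying position $l$ completes at rate $\phi_j(m_j)\gamma_j(l,m_j)$; summing $\gamma_j(l,m_j)$ over the positions held by such packets and then over positions, the instantaneous completion rate for stage-$k$ route-$i$ packets is $\phi_j(m_j)\cdot(\text{fraction of service at }j\text{ going to stage-}k\text{ route-}i\text{ packets})$. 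Because the queue does not discriminate by class and positions are exchangeable (Proposition~\ref{single server queue ed} and the remark after it), conditionally on $m$ this fraction has expectation $m_{ji}^{(k)}/m_j$ where $m_{ji}^{(k)}$ is the number of stage-$k$ route-$i$ packets at $j$; but in fact for the ``with spinning'' routing each route-$i$ visit to queue $j=j^i_k$ corresponds to exactly one stage value, so $m_{ji}^{(k)}$ is simply the count of packets at $j$ that are route-$i$ and at stage $k$.

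Next I would reduce the computation to a ratio of normalising constants. The stationary throughput of stage-$k$ route-$i$ packets is
\begin{equation*}
\sum_{m\in S(n)} \bP_n(M=m)\,\phi_j(m_j)\,\frac{(\text{number of stage-}k\text{ route-}i\text{ packets at }j\text{ in state }m)}{m_j}.
\end{equation*}
Here the key structural point, which I would establish first, is that the detailed state (routes and stages of every packet at every queue) is, conditional on $M=m$, uniformly distributed over the admissible orderings, so that the expected number of stage-$k$ route-$i$ packets at queue $j=j^i_k$ equals the expected number of route-$i$ packets at $j$ divided by $\zeta_{ji}$ --- each of the $\zeta_{ji}$ visits of route $i$ to queue $j$ is equally likely, and exactly one of them is ``stage $k$''. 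Plugging in (\ref{closed ed}) and cancelling the factor $\phi_j(m_j)$ against the top term of $\prod_{l=1}^{m_j}\phi_j(l)$, together with peeling off one factor of $\zeta_{ji}\rho$-type weight and one unit from $m_{ji}$, the sum collapses: the expression becomes $\frac{1}{n_i}\frac{1}{B_n}\sum_{m'\in S(n-e_i)}(\cdots)=\frac{1}{n_i}\frac{B_{n-e_i}}{B_n}$, by the definition (\ref{bn}) of $B_n$. I would present this as a change of variables $m\mapsto m'=m-e_{ji}$ (remove one route-$i$ packet from queue $j$), checking that it is a bijection between the relevant index sets and that the combinatorial multinomial coefficients and the $\zeta,\phi$ products transform exactly as needed to reproduce the summand of $B_{n-e_i}$.

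The main obstacle I anticipate is the bookkeeping in this change of variables: one must verify carefully that the multinomial coefficient $\binom{m_j}{m_{ji}:i\ni j}$ times $m_{ji}/m_j$ equals $\binom{m_j-1}{\cdots}$ times the appropriate factor, and that the cancellation of $\phi_j(m_j)$ is exactly right, so that no residual $m$- or $j$-dependent constants survive --- this is what makes the answer depend only on $n$ and not on $k$ or on which queue realises stage $k$. A secondary point requiring care is the justification that the stage-$k$ throughput does not depend on $k$: this follows because in the closed network every route-$i$ packet passes through stage $k$ at queue $j^i_k$ exactly once per circuit, so by a conservation (rate-in equals rate-out around the cycle) argument all stages of route $i$ have a common throughput; I would either invoke this conservation directly or let it fall out of the computation above, which never actually used the specific value of $k$ beyond identifying the queue $j^i_k$. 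Finally, the case $n_i>0$ is needed precisely so that $\frac{1}{n_i}$ and the removal $n\mapsto n-e_i$ make sense; irreducibility (Assumption~\ref{assump}) guarantees the stationary distribution in Corollary~\ref{closed corol} is the one governing the ergodic averages.
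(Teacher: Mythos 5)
Your approach is the same as the paper's: condition on $M=m$, use exchangeability of positions and of the $\zeta_{ji}$ visits to get the factor $\frac{1}{\zeta_{ji}}\frac{m_{ji}}{m_j}$, multiply by $\phi_j(m_j)$, and collapse the sum over $S(n)$ via the change of variables $m\mapsto m-e_{ji}$. The bookkeeping you worry about does work out exactly as you hope: $\binom{m_j}{m_{ji}:i\ni j}\frac{m_{ji}}{m_j}=\binom{m_j-1}{\cdots}$, $\phi_j(m_j)$ cancels the top factor of $\prod_{l=1}^{m_j}\phi_j(l)$, and $\zeta_{ji}^{m_{ji}}/\zeta_{ji}=\zeta_{ji}^{m_{ji}-1}$, so the summand becomes precisely the $B_{n-e_i}$ summand. (Minor slip: there is no $\rho$ in the closed-network distribution (\ref{closed ed}); only the $\zeta$ factor is peeled off.)

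There is, however, one genuine gap. The sum you display is the \emph{aggregate} rate of stage-$k$ route-$i$ service completions, and after the change of variables it equals $\frac{B_{n-e_i}}{B_n}$ --- the factor $\frac{1}{n_i}$ does not emerge from the collapse, yet it appears without justification in your final line. The corollary asks for the throughput of \emph{each individual} route-$i$ packet, so you must divide the aggregate by $n_i$, and this requires an argument: by Assumption \ref{assump} (irreducibility) all arrangements of the $n_i$ route-$i$ packets are equally likely in stationarity, so the packet completing a given circuit is any specific one of them with probability $\frac{(n_i-1)!}{n_i!}=\frac{1}{n_i}$. The paper inserts exactly this factor inside the sum before collapsing it. Without this step your computation proves the (correct, and also needed later) identity $\Lambda_i^{SN}(n)=\frac{B_{n-e_i}}{B_n}$ for the total route-$i$ throughput, but not the per-packet statement of the corollary.
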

\begin{proof}
 The probability the network is in state $m\in\bZ_+^K$ is given by (\ref{closed ed}). Given the network is in state $m$, by Corollary 3.4 of \cite{Ke79} or from stationary distribution (\ref{closed ed}), the probability at queue $j$ the packet position $k'\in\{1,...,m_j\}$ is traversing route $i$ at stage $k$ is $\frac{1}{\zeta_{ji}}\frac{m_{ji}}{m_j}$. The throughput of the packet in position $k'$ of queue $j$ is $\gamma_j(k',m_j)\phi_j(m_j)$. By our irreducibility assumption, all arrangements of the $n_i$ route $i$ packets are equally likely. Thus the probability this packet is any specific route $i$ packet is $\frac{(n_i-1)!}{n_i!}=\frac{1}{n_i}$. Thus, the stationary throughput of this route $i$ packet is
\begin{align*}
&\:\sum_{\substack{m\in S(n):\\ m_{j}>0}} \sum_{k'=1}^{m_j}\gamma_j(k',m_j)\phi_j(m_j)\frac{1}{\zeta_{ji}}\frac{m_{ji}}{m_j}\frac{1}{n_i}\frac{1}{B_n}\prod_{l\in\mJ}\left(\left(\begin{array}{cc} m_l\\m_{lr}\;:r\ni l
\end{array}\right)\left(\frac{\prod_{r: l \in r}\zeta^{m_{lr}}_{lr} }{\prod_{c=1}^{m_l}\phi_l(c) }\right)\right)\\
&=\sum_{\substack{m\in S(n):\\ m_{j}>0}}\phi_j(m_j)\frac{1}{\zeta_{ji}}\frac{m_{ji}}{m_j}\frac{1}{n_i}\frac{1}{B_n}\prod_{l\in\mJ}\left(\left(\begin{array}{cc} m_l\\m_{lr}\;:r\ni l \end{array}\right)\left(\frac{\prod_{r: l \in r}\zeta^{m_{lr}}_{lr} }{\prod_{c=1}^{m_l}\phi_l(c) }\right)\right)\\
&=\sum_{m'\in S(n-e_i)}\frac{1}{n_i}\frac{1}{B_n}\prod_{l\in\mJ}\left(\left(\begin{array}{cc} m'_l\\m'_{lr}\;:r\ni l \end{array}\right)\left(\frac{\prod_{r: l \in r}\zeta^{m'_{lr}}_{lr} }{\prod_{c=1}^{m'_l}\phi_l(c) }\right)\right)=\frac{1}{n_i}\frac{B_{n-e_i}}{B_n}.
\end{align*}
In the first inequality, we used the fact that $\sum_{l=1}^{m_j}\gamma_j(l,m_j)=1,\; \forall\, m_j\in\bN$. In the second equality, we cancelled terms and substituted $m'_{lr}=m_{lr}-1$ if $(l,r)=(j,i)$ and $m'_{lr}=m_{lr}$ otherwise.
\end{proof}
In subsequent chapters, an important quantity will be
\begin{equation*}
\Lambda^{SN}_i(n)=\frac{B_{n-e_i}}{B_n},
\end{equation*}
the stationary rate packets are transferred on route $i$ of a closed multi-class queueing network.

\section{Bandwidth sharing networks}\label{Bandwidth networks}

In this section, we consider a flow level bandwidth sharing model introduced by Massouli\'e and Roberts \cite{MaRo98, MaRo99}. We call these models \textit{stochastic flow level models} (SFLM). SFLMs model the dynamic, elastic transfer rate received by document transfers in a communication network. 
Multi-class queueing networks with spinning (MQNwS) model packet level dynamics SFLMs model document level dynamics.
We think of MQNwSs as a microscopic model of a communication network.
We think of SFLMs as a macroscopic model of a communication network.
We will formally relate MQNwS and SFLMs.

Massouli\'e and Roberts \cite{MaRo99} discuss the separation of time scales between a certain SFLM and MQNwS.  In the next section, we will give a proof that a SFLM is the limit of a sequence of MQNwS, and thus, we formally justify a separation of time scales. We call our limit flow level model a ``spinning network''. The models of this type are considered by Bonald and Proutiere \cite{BoPr04} under the name the ``Store-Forward Network''.

In performing this analysis, we are able to prove general insensitivity results for the spinning network. As cited by Proutiere \cite[Section 3.4]{Pr03} the spinning network was first considered by Massouli\'e because of its insensitivity. Bonald and Proutiere \cite{BoPr04} proved insensitivity for spinning networks with documents with size given by phase type distributions.

In this section, we introduce the stochastic flow level models and we define the spinning network. In the next section, and specifically in Theorem \ref{spinning converge}, we prove the main result of this chapter: the weak convergence of a sequence multi-class queueing networks to its spinning network. In Section \ref{Ch2: Insensitivity Sec} and specifically in Corollary \ref{Ch2: insensitive coroll}, we prove insensitivity results which hold as a consequence of Theorem \ref{spinning converge}.

\subsection{Bandwidth allocations and stochastic flow level models}\label{sec:sflm}\label{gsflm}
A \textit{bandwidth allocation policy} is a map $\Lambda:\bZ_+^I\rightarrow \bR_+^I$. For $n\in\bZ_+^I$, the vector $\Lambda(n)=(\Lambda_i(n):i\in\mI)$ is a \textit{bandwidth allocation}. Here $\Lambda_i(n)$ represents the rate that route $i$ documents are transferred through each route of a communication network, given there are $n=(n_i:i\in\mI)$ documents in transfer on each route. 


The stochastic model we describe represents the randomly varying number of document transfers within a network. The model is studied as a flow level model of Internet congestion control. We first assume that documents have a size that is exponentially distributed. We will then generalise to document sizes that are independent and of a general distribution.

For document sizes that are independent exponentially distributed, a \textit{stochastic flow level model operating under bandwidth allocation policy $\Lambda$} (SFLM)  is a continuous-time Markov chain on $\bZ_+^I$ with rates
\begin{align}\label{sflm}
q(n,n')=
\begin{cases}
\nu_i & \text{if }\; n'=n+e_i,\\
\mu_i\Lambda_i(n) & \text{if }\; n'=n-e_i\text{ and }n_i>0,\\
0 & \text{otherwise},
\end{cases}
\end{align}
for $n,n'\in\bZ_+^I$, where $e_i$ is the $i$-th unit vector in $\bZ_+^I$.

This model can be interpreted as follows: documents wishing to be transferred across route $i$ arrive as a Poisson process of rate $\nu_i$. These documents have a size that is independent and exponentially distributed with mean $\mu_i^{-1}$. If currently the number of documents in transfer across routes is given by vector $n\in\bZ_+^I$ then each document on route $i$ is transferred at rate $\frac{\Lambda_i(n)}{n_i}$. Documents are then processed at this rate until there is a change in the network's state, either by a document transfer being completed and thus leaving the network, or by a document arrival occurring. Thanks to the memoryless property of our process we need not record residual document sizes when an arrival or departure event occurs. 

The key distinction between this model of document transfer and our previous queueing models is that we do not consider packet level dynamics. These dynamics are abstracted away, and instead, we only consider the flow-level descriptions of the network's state.

We can generalise SFLMs to allow the transfer of documents with an independent arbitrarily distributed size. In this case, similar to the flow level state of a multi-class queueing network with spinning, we record the \textit{flow level state} of a generalised SFLM. For each document in transfer, we will record the documents residual size, that is the amount of the document that is still to be processed. Given there are $n=(n_i:i\in\mI)$ documents in transfer, the flow level state of a generalised stochastic flow level model is given by the vector
\begin{equation}\label{flow description}
 y=(y_{ik}: k=1,...,n_i, i\in\mI). 
\end{equation}
Here $y_{ik}\in (0,\infty)$ is the residual document size of the $k$-th document in transfer on route $i$. We order elements so that $y_{ik} \leq y_{ik+1}$ for $k=1,...,n_i-1$. 

The dynamics of this generalised SFLM are morally the same as our previous definition: documents arrive as a Poisson process; documents are transferred at an elastic rate depending on the number of documents in transfer along different routes and documents depart once transferred. 

More explicitly, the dynamics of this model are defined as follows. Documents for transfer on route $i$ arrive as a Poisson process of rate $\nu_i$. An arriving document on route $i$ will then have a residual document size $X'_i$ added to the flow level description \eqref{flow description}. We assume $X'_i$ is an independent positive random variable with finite mean $\mu_i^{-1}$, and we assume $X'_i$ is equal in distribution to some positive random variable $X_i$. In between a document arrival or departure event, the residual document size of a route $i$ document decreases linearly at rate $\frac{\Lambda_i(n)}{n_i}$. A document on route $i$ departs the network at the instant its residual document size equals $0$. At this point, the corresponding document is removed from the network's flow level state description.

Given the network's state $y$, all future events are a function of $y$ and independent random variables, thus the state description describes this process as a Markov process. As described in Section \ref{multi spinning}, we let $\mY$ be the set of flow level states.

\subsection{Insensitive stochastic flow level models}

A stochastic flow level model, as described above, has stationary distribution $\pi_Y$ when
\begin{equation*}
 \pi_Y(A)=\bP(Y(0)\in A)\quad  \text{implies}\quad   \pi_Y(A)=\bP(Y(t)\in A),
\end{equation*}
 $\forall\:A\in {\mathcal B}(\mY)$ and $\forall$ $t\in\bR_+$. Here ${\mathcal B}(\mY)$ is the Borel $\sigma$-field defined on the set of flow level states $\mY$ from norm (\ref{Ch2: Y norm}). 
We say that a random variable $X$ with values in $\bR_+$ is \textit{non-atomic} if $\bP(X=x)=0$ for all $x\in\bR_+$.

 We say that a stochastic flow level model is \textit{insensitive} to non-atomic distributions with stationary distribution $\pi_N=(\pi_N(n):n\in\bZ_+^I)$, if every generalised SFLM with non-atomic document size distributions, mean document sizes $(\frac{1}{\mu_i}: i\in\mI)$ has a stationary distribution $\pi_Y$ satisfying $$\pi_N(n)=\bP_{\pi_Y}(N(0)=n),\qquad \forall\: n\in\bZ_+^I.$$ In other words, the distribution $\pi_N$ only depends on the document size distribution through its mean $(\mu^{-1}_i:i\in\mI)$.

\subsection{Spinning networks}\label{Ch2: Spinning Networks}
Bandwidth allocations represent the stationary rate of document transfer, given the number of documents in transfer on each route. From Corollary \ref{throughput}, we can define a bandwidth allocation that represents the stationary behaviour of a MQNwS. We define a \textit{spinning allocation} to be the stationary throughput of a closed multi-class queueing network. That is for each $\forall\, n\in\bZ_+^I$, we define
\begin{equation}
\Lambda^{SN}_i(n):=\frac{B_{n-e_i}}{B_n},\label{spinning allocation}
\end{equation}
where $e_i$ is the $i$-th unit vector in $\bR_+^I$ and $B_n$ is defined by (\ref{bn}). The stochastic flow level model defined by a spinning allocation policy is called \textit{a spinning network}.
Proutiere \cite{Pr03} notes that this bandwidth allocation is first defined by Laurent Massouli\'e. Insensitivity results on this bandwidth allocation are explored in Bonald and Proutiere\cite{BoPr04}.

\section{Convergence of open queueing networks to spinning networks}\label{sec:spinning conv}

We are now in a position to prove the main results of this paper. The stochastic flow level models of \cite{MaRo98} are intended to represent the flow level dynamics of document transfer in a packet switched network. The aim of this section is to formally justify this interpretation for spinning networks. As a consequence of this analysis, we are able to formally prove insensitivity of spinning networks.

For exponential document sizes and processor sharing queues of fixed capacity, it has been demonstrated that a series of multi-class queueing networks converged weakly to the spinning network in the Skorohod topology \cite{Wa09}. In this section, we generalize theses argument to include general document size distributions and for the general queueing networks discussed in Section \ref{multi spinning}. Although our proof is applied to networks of quasi-reversible queues, the proof applied is phrased so that a more diverse range of queueing processes could be considered. In this sense we generalize Theorem 3.1 \cite{Wa09}, whose proof is specific to the specific queueing and document sizes considered.

\subsection{Limit and prelimit parameters}
For our limit model, we consider the stochastic flow level model for the spinning network. We assume documents have a general positive distribution. As discussed in Section \ref{gsflm}, we assume documents for transfer on route $i\in\mI$ have a distribution given by positive random variable $X^{(\infty)}_i$, with finite mean $\mu_i^{-1}$. We let process $Y^{(\infty)}=(Y^{(\infty)}(t)\in\mQ:t\in\bR_+)$ give the flow level state of this generalised stochastic flow level model and we let $N^{(\infty)}=(N^{(\infty)}(t)\in\bZ_+^I: t\in\bR_+)$ give the number of documents in transfer on each route of the spinning network.

For our prelimit model, we consider a sequence of multi-class queueing networks with spinning indexed by $c\in\bN$. For this sequence, we assume that the parameters for queues $\mJ$, routes $\mI$, route orders $(j_1^i,...,j_{k_i}^i)$ and Poisson arrival rates $\nu=(\nu_i: i\in\mI)$ are all fixed and coincide with the same parameters used to define our spinning network. In our sequence of multi-class queueing networks with spinning, we choose to vary the number of packets in each document and the rate at which packets are transferred through the network. For the $c$-th multi-class queueing network, we let route $i$ document's size have a distribution $X^{(c)}_i$ such that
\begin{equation*}
 \frac{X^{(c)}_i}{c}\Rightarrow X^{(\infty)}_i,\quad \text{as}\quad c\rightarrow\infty,\quad  i\in\mI,
\end{equation*}
and we vary the queueing capacities so that $\phi^{(c)}_j(\cdot)=c\phi_j(\cdot)$, for $j\in\mJ$. 

Our choice of scalings are purposefully chosen so that transitions between queues occur at a time scale of order $O(\frac{1}{c})$ and thus the number of transitions before a document departure is of order $O(1)$. See Figure \ref{Ch2: figure 2} for further explanation.

\begin{figure}\label{Ch2: figure 2}
\scalebox{0.7}[0.7]{\includegraphics{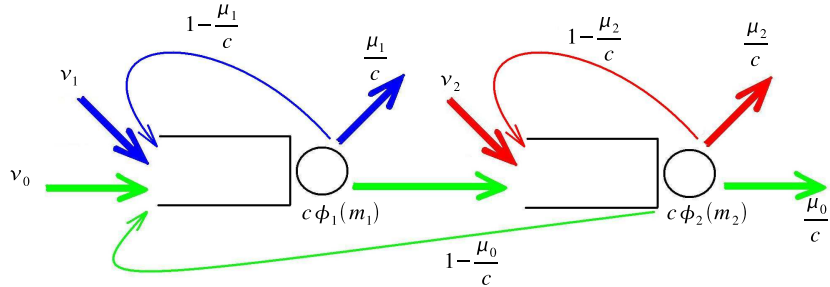}}
\caption{The picture above gives the transition rates for a multi-class queueing network with spinning. Queues process packets at rates given by $c\phi_1(m_1)$ and $c\phi_2(m_2)$. Thus the rate packets are transferred between queues is of order $O(c)$. Documents on routes 0, 1 and 2 arrive as a Poisson processes of rates $\nu_0, \nu_1, \nu_2$. Therefore documents arrive at a rate of order $O(1)$. In this example, documents on routes 0, 1 and 2 have a geometric distribution with parameters $\mu_0/c$, $\mu_1/c$ and $\mu_2/c$, respectively. Now consider the rate documents depart the network. For route 0, for example, the rate documents depart is of the order of $c\phi_2(m_2)\times \mu_0/c=\mu_0\phi_2(m_2)$. Thus document departures occur at a rate of order $O(1)$. This justifies a separation of timescales between packet transfer and document transfer. This separation of timescales will be required to form a limit process from a sequence of multi-class queueing networks with spinning. }
\end{figure}

For $c\in\bN$, we let process $Q^{(c)}=(Q^{(c)}(t)\in\mQ:t\in\bR_+)$ give the explicit queueing description of the $c$-th multi-class queueing network with spinning and we let process $N^{(c)}=(N^{(c)}(t)\in\bZ_+^I: t\in\bR_+)$ give the number of documents in transfer on each route of the $c$-th network. Let $Y^{(c)}=(Y^{(c)}(t): t\in\bR_+)$ and $\tilde{Q}^{(c)}=( \tilde{Q}^{(c)}(t):\: t\in\bR_+)$ be the respective processes corresponding to the flow level state and packet level state of the $c$-th multi-class queueing network with spinning. 

Associated with the multi-class queueing network with spinning $Q^{(1)}$, we will consider $\bar{Q}_n=(\bar{Q}_n(t): t\in\bR_+)$ the closed multi-class queueing network with $n\in\bZ_+^I$ packets on each route. We make the following assumption about each $\bar{Q}_n$

\begin{assumption}\label{assump 2}
We assume Assumption \ref{assump} holds for $\bar{Q}_n$ for all $n\in\bZ_+^I$. That is $\bar{Q}_n$ is an irreducible Markov chain for all $n\in\bZ_+^I$.
\end{assumption}

As noted in Section \ref{closed queue} this assumption excludes reducibility issues which can occur in closed queueing networks where a queue serves a single deterministically chosen packet.

We will also require an assumption on the spinning network $Y^{(\infty)}$.

\begin{assumption}\label{assump 3}
We assume for $Y^{(\infty)}$ that, almost surely, there are no simultaneous document arrival-departure events.
\end{assumption}

This assumption avoids complications associated with the definition of convergence in the Skorohod Topology. Later we will verify that if distribution $X_i^{(\infty)}$ is non-atomic $\forall\, i\in\mI$ then Assumption \ref{assump 3} holds.\footnote{Given the Poisson arrival process of this model Assumption \ref{assump 3} should hold for all SFLMs, provided the initial distribution is chosen so that documents do not arrive or depart at the same time.} 

Our main theorem, Theorem \ref{spinning converge}, considers weak convergence on bounded time intervals. Thus, we do not require assumptions on the networks long run behaviour, such as Assumption \ref{MQNwS assump}, however we will subsequently require some assumptions for results on insensitivity.

\subsection{Theorem and proof}
We now introduce and prove the main result.

\begin{theorem} \label{spinning converge}
For $c\in\bN$, take an multi-class queueing network with spinning $Q^{(c)}$, as described above. Assume Assumptions \ref{assump 2} and \ref{assump 3} hold for each $c\in\bN$. Let $Y^{(\infty)}$ denote the flow level state of the spinning network, as described above. If the initial flow level state converges,
\begin{equation}\label{Ch2: inital Y}
\frac{Y^{(c)}(0)}{c}\Rightarrow Y^{(\infty)}(0)\quad\text{as}\quad c\rightarrow\infty
\end{equation}
then, for each $T>0$, the stochastic processes converge in the Skorohod topology on interval $[0,T]$
\begin{equation*}
\frac{Y^{(c)}}{c}\Rightarrow Y^{(\infty)}\quad\text{as}\quad c\rightarrow\infty.
\end{equation*}
\end{theorem}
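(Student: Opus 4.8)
The plan is to prove convergence by a coupling argument that tracks document arrival/departure epochs directly, exploiting the separation of timescales. First I would set up the coupling: realize all prelimit networks $Q^{(c)}$ and the limit $Y^{(\infty)}$ on a common probability space by using a single family of independent Poisson arrival clocks of rate $\nu_i$ on each route, a single family of uniform random variables driving document-size generation (so that $X_i^{(c)}/c$ and $X_i^{(\infty)}$ are coupled monotonically via the assumed weak convergence and the Skorohod representation theorem), and by choosing the initial states $Y^{(c)}(0)/c$ and $Y^{(\infty)}(0)$ coupled so that \eqref{Ch2: inital Y} holds almost surely. The strategy is then an induction over the successive document arrival/departure events of the limit process $Y^{(\infty)}$ on $[0,T]$: there are almost surely finitely many such events (by the Poisson arrival rate and Assumption \ref{assump 3}), and between consecutive events the vector $N^{(\infty)}$ is constant at some value $n$.

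The heart of the argument is the behaviour on one such inter-event interval, say $[\sigma, \tau)$ with $N^{(\infty)} \equiv n$ and $n_i > 0$ on the active routes. Here I would argue that, conditional on the number of documents $n$ being frozen, the packet-level process $\tilde{Q}^{(c)}$ of the $c$-th network runs like the closed multi-class queueing network $\bar{Q}_n$ but sped up by a factor $c$ (because $\phi_j^{(c)} = c\phi_j$), while document sizes are inflated by $c$. Since $\bar{Q}_n$ is a finite irreducible Markov chain (Assumption \ref{assump 2}), it mixes to its stationary distribution on an $O(1)$ timescale, hence on an $O(1/c)$ timescale after the speed-up; therefore, away from a negligible initial layer, the instantaneous throughput of route-$i$ packets is close to the stationary throughput $\frac{1}{n_i}\Lambda_i^{SN}(n)$ from Corollary \ref{throughput}. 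A law-of-large-numbers / ergodic-averaging estimate (e.g. via the functional CLT or simply Chebyshev on the time-integral of the throughput of the closed chain, using that the chain is finite) then shows that the cumulative number of packets transferred on route $i$ up to time $t$, divided by $c$, converges uniformly on compact sets to $\frac{1}{n_i}\Lambda_i^{SN}(n)\,(t-\sigma)$ — which is exactly the linear decrease rate of residual document sizes in $Y^{(\infty)}$. Combined with the coupled monotone convergence of document sizes, this gives $\sup_{t \in [\sigma,\tau)} \| Y^{(c)}(t)/c - Y^{(\infty)}(t)\| \to 0$ and, crucially, that the next document-completion epoch of the $c$-th network (the first route to hit residual size $0$) converges to $\tau$; the next arrival epoch converges trivially since the arrival clocks are shared. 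One then re-synchronizes the state at this epoch and proceeds to the next interval; after finitely many steps one covers $[0,T]$, and the uniform-on-$[0,T]$ convergence of $Y^{(c)}/c$ to $Y^{(\infty)}$ in $(\mathcal{Y}, \|\cdot\|)$ implies convergence in the Skorohod topology (in fact, since the limit has no simultaneous jumps and only finitely many jumps, Skorohod convergence follows from convergence of jump times and jump values).

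The main obstacle I anticipate is the averaging step on a single inter-event interval: one must control the convergence of the time-integrated packet throughput of the closed network \emph{uniformly} over the (random) interval, and handle the short transient during which $\tilde{Q}^{(c)}$ has not yet equilibrated. The transient contributes only $O(1/c)$ to the scaled packet count, so it is harmless, but making this rigorous requires a uniform bound on the closed chain's relaxation independent of which state it starts in, which is fine since the state space $\tilde{\mathcal{Q}}(n)$ is finite. A subtler point is a boundary/continuity issue: as the residual size of some document approaches $0$, the number of documents $n$ is about to change, and one needs the completion to actually occur (rather than the scaled residual size hovering near $0$); this is where Assumption \ref{assump 3} and the non-atomicity (ensuring the limiting completion times are well-separated from arrival times and that exactly one route completes at a time) are used to rule out degenerate coincidences. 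A final bookkeeping obstacle is propagating the coupling error across the finitely many intervals — the error at the start of interval $k+1$ is the error accumulated through interval $k$ — but since each interval contributes a vanishing (in $c$) error and there are a finite (random, but a.s. finite) number of intervals on $[0,T]$, a standard $\varepsilon/2^k$-type or simply inductive argument closes this.
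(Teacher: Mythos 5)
Your proposal follows essentially the same route as the paper's proof: a pathwise coupling, induction over the arrival--departure epochs, closed-network behaviour between epochs with a law-of-large-numbers estimate identifying the per-document throughput $\Lambda_i^{SN}(n)/n_i$, Assumption \ref{assump 3} to single out which event occurs next, and finitely many epochs on $[0,T]$ patched together via a time change to get Skorohod convergence. The only substantive difference is that the paper makes the averaging step precise via the functional renewal theorem applied to a single closed-chain trajectory entered at hitting times $\sigma_{\tilde{q}}$ (yielding the almost-sure uniform convergence that your Chebyshev/mixing sketch would deliver only in probability), and it adds an explicit reordering lemma to reconcile packet-indexed residual sizes with the increasing ordering of the flow-level state.
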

\begin{proof}[\textbf{Proof of Theorem \ref{spinning converge}}]
We will prove this result using a coupling argument. In between document arrival-departure events, a MQNwS behaves as a closed queueing network. We couple MQNwS so that in between arrival-departure events this closed queueing network behaviour is determined by a single closed queueing process. By doing this, Skorohod convergence results become a consequence of renewal theory results.

We split the proof into four sections. In the first section, we couple the queueing network's initial states. In the second section, we state an induction hypothesis which we will use to prove weak convergence. In the third section, we form a coupling of our queueing networks. In the fourth section, we prove this coupling satisfies the induction hypothesis. Finally in the fifth section, we prove weak convergence in the Skorohod topology.\\
\textbf{Coupling the initial state:}\\
We start by coupling the initial state of our process. By (\ref{Ch2: inital Y}) and the Skorohod Representation Theorem \cite[Section 6]{Bi99} we may choose a sequence of coupled random variables $\{Y^{(c)}(0)\}_{c\in \bN\cup \{ \infty\}} $ such that, almost surely
\begin{equation} 
 \frac{Y^{(c)}(0)}{c} \xrightarrow[c\rightarrow \infty]{} Y^{(\infty)}(0). \label{CH2: coupled inital Y}
\end{equation}
For $c\in\bN$ and given our coupled sequence $\{Y^{(c)}(0)\}_{c\in \bN\cup \{ \infty\}}$ we know the required distribution of $Q^{(c)}(0)$. We may choose a sequence of functions $f^{(c)}: {\mathcal Y}\times [0,1]\rightarrow \mQ$ such that,
\begin{equation*}
 \bP(f^{(c)}(y,U)=q)=\bP(Q^{(c)}(0)=q|Y^{(c)}(0)=y),\qquad \forall\, q\in\mQ,\; y\in\mY
\end{equation*}
where here $U$ is an independent uniform random variable on $[0,1]$. Thus from a single uniform random variable and the coupled sequence $\{Y^{(c)}(0)\}_{c\in \bN}$, we may define the coupled initial state of each MQNwS by
\begin{equation}
 Q^{(c)}(0)=f^{(c)}(Y^{(c)}(0),U),\qquad c\in\bN. \label{CH2: coupled inital Q}
\end{equation}
\textbf{Induction Hypothesis:}\\
We now inductively construct our coupled process under the following induction hypothesis on $\kappa\in\bZ_+$. For $c\in\bN\cup \{\infty\}$, let $\tau^{\kappa,(c)}$ be the $\kappa$-th document arrival-departure event for the flow level state of our coupled process $Y^{(c)}$, $c\in\bN\cup \{\infty\}$. We assume under this induction hypothesis that we have already defined $Y^{(c)}$ on the interval $[0,\tau^{\kappa,(c)}]$ and that under this coupling
\begin{align}
 \tau^{k,(c)} &\xrightarrow[c\rightarrow\infty]{} \tau^{k,(\infty)}\label{Ch2: tau kappa conv},\qquad k=0,...,\kappa,\\
\frac{Y^{(c)}(\tau^{k,(c)})}{c}&\xrightarrow[c\rightarrow\infty]{} Y^{(\infty)}(\tau^{k,(\infty)}),\qquad k=0,...,\kappa. \label{Ch2: Y kappa conv}
\end{align}
Our induction hypothesis states that there exists a coupling of $Y^{(c)}$ extended to the next arrival-departure event, i.e. on the interval $(\tau^{\kappa,(c)},\tau^{\kappa+1,(c)}\wedge T]$, such that,
\begin{align}
 &\sup_{t\in (\tau^{\kappa,(c)},\tau^{\kappa+1,(c)}\wedge T]} |\lambda^{\kappa,(c)}(t)-t |\xrightarrow[c\rightarrow\infty]{} 0\label{Ch2: time converge} \\
 &\sup_{t\in (\tau^{\kappa,(c)},\tau^{\kappa+1,(c)}\wedge T]} \bigg|\bigg| Y^{(\infty)}(\lambda^{\kappa,(c)}(t))-\frac{Y^{(c)}(t)}{c}\bigg|\bigg| \xrightarrow[c\rightarrow \infty]{} 0, \label{Ch2: space converge}
\end{align}
where $\lambda^{\kappa, (c)}:[\tau^{\kappa,(c)},\tau^{\kappa+1,(c)}\wedge T]\rightarrow [\tau^{\kappa,(\infty)},\tau^{\kappa+1,(\infty)}\wedge T] $ is the function that linearly interpolates between $\lambda^{\kappa, (c)}(\tau^{\kappa,(c)})= \tau^{\kappa,(\infty)}$ and $\lambda^{\kappa, (c)}(\tau^{\kappa+1,(c)}\wedge T)= \tau^{\kappa+1,(\infty)}\wedge T$. Here norm $||\cdot ||$ is defined by (\ref{Ch2: Y norm}). This completes the statement of the induction hypothesis.

Note taking $\tau^{0,(c)}=0$, $\forall\, c\in\bN$ by (\ref{CH2: coupled inital Y}) and (\ref{CH2: coupled inital Q}) our induction hypothesis holds for $\kappa=0$. Also note the convergence statements (\ref{Ch2: time converge}) and (\ref{Ch2: space converge}) are stronger than (\ref{Ch2: tau kappa conv}) and (\ref{Ch2: Y kappa conv}). 
\textbf{Coupling:}\\
Given our induction hypothesis holds until time $\tau^{\kappa,(c)}$, we will define a coupling until the next arrival-departure time $\tau^{\kappa+1,(c)}$. In order to simplify notation, we will use the shorthand $\tilde{q}^{\kappa,(c)}=\tilde{Q}^{(c)}(\tau^{\kappa,(c)})$, $y^{\kappa,(c)}=Y^{(c)}(\tau^{\kappa,(c)})$ and $n^{\kappa,(c)}=N^{(c)}(\tau^{\kappa,(c)})$. These denote the packet level state, flow level state and number of documents in transfer for the $c$-th network at time $\tau^{\kappa,(c)}$.

By assumption (\ref{Ch2: tau kappa conv}), $\exists\, c'$ such that $\forall\, c>c'$
\begin{equation}
n^{\kappa,(c)}=n^{\kappa,(\infty)}. \label{Ch2: n to n}
\end{equation}
Let $\bar{Q}^{\kappa}$ define a closed multi-class queueing network with $n^{\kappa,(\infty)}$ packets across each route and with queue service capacities defined by $(\phi_j(\cdot): j\in\mJ)$. For states $\tilde{q}\in\tilde{\mQ}(n^{\kappa,(\infty)})$, let $\sigma_{\tilde{q}}$ define the first time $\bar{Q}^\kappa$ hits the state $\tilde{q}$.
As $\bar{Q}^{\kappa}$ is an irreducible, positive recurrent Markov chain, almost surely $\sigma_{\tilde{q}}<\infty$, $\forall\, \tilde{q}\in\tilde{Q}(n^{\kappa,(\infty)})$.

The packet level state of a multi-class queueing network with spinning behaves as a closed queueing network between arrival-departure events. Thus we can extend the packet level description of the $c$-th multi-class queueing network with spinning by defining, $\forall\, c>c'$
\begin{equation}\label{Ch2: Q coupling}
 \tilde{Q}^{(c)}(t)=\bar{Q}^{\kappa}(c(t-\tau^{\kappa, (c)}) + \sigma_{\tilde{q}^{\kappa, (c)}}),\qquad t\in(\tau^{\kappa, (c)}, \tau^{\kappa+1, (c)}).
\end{equation}
We will shortly define $\tau^{\kappa+1,(c)}$. 

We, also, define the flow level state of the $c$-th multi-class queueing network with spinning. We associate each packet in the closed queueing network $\bar{Q}^{\kappa}$ at time $\sigma_{\tilde{q}^{\kappa,(c)}}$ with a packet in the $c$-th MQNwS at time $\tau^{\kappa, (c)}$. For each route, let $k$ index the packets associated with each document at time $\tau^{\kappa,(c)}$. We retain this same index until time $\tau^{\kappa+1,(c)}$. Let $\bar{A}^{\kappa}_{ik}(t)$ denote the number of transitions where the $k$-th packet on route $i$ in the $c$-th MQNwS has traversed route $i$ in closed queueing network $\bar{Q}^{\kappa}(t)$ by time $t$. We define the components of the flow level process of the $c$-th MQNwS by
\begin{equation}\label{flow coupling 1}
 Y^{(c)}_{ik}(t)=Y_{ik}^{(c)}(\tau^{\kappa,(c)})-\bar{A}^{\kappa}_{ik}(c\{t-\tau^{\kappa,(c)}\} + \sigma_{\tilde{q}^{\kappa, (c)}}),
\end{equation}
for $k=1,...,n^{\kappa, (c)}_i,$ $i\in\mI$, $t\in (\tau^{\kappa,(c)}, \tau^{\kappa+1,(c)})$, and for $c>c'$. Similarly, for $c=\infty$, for the spinning network we define
\begin{equation}\label{flow coupling 2}
 Y^{(\infty)}_{ik}(t)=Y_{ik}^{(\infty)}(\tau^{\kappa,(\infty)})-\frac{\Lambda^{SN}_i(n^{\kappa, (\infty)})}{n_i^{\kappa,(\infty)}}(t-\tau^{k,(\infty)}),
\end{equation}
for $k=1,...,n^{\kappa, (\infty)}_i$, $i\in\mI$ and $t\in (\tau^{\kappa,(\infty)}, \tau^{\kappa+1,(\infty)})$.
Recall in the definition of the flow level state of a MQNwS, residual file sizes of each route are indexed to be increasing in size, (i.e. $y_{ik}\leq y_{i,k+1}$).
In both expressions (\ref{flow coupling 1}) and (\ref{flow coupling 2}) we do not index $Y_{ik}^{(c)}$ so that residual file sizes are increasing. Instead we index $Y_{ik}^{(c)}$ so that it is associated with a specific packet on route $i$ in closed queueing network $\bar{Q}^{\kappa}$. This representation is required so that packet indices do not change over interval $(\tau^{\kappa,(c)},\tau^{\kappa+1,(c)})$. Even so, these indices are a permutation of the ordering in which residual files sizes are increasing in size.

Note if the processing of the $(i,k)$-th document is not interrupted by another arrival departure event then, for $c>c'$, this document would depart at time
\begin{equation*}
 S_{ik}^{\kappa,(c)}=\tau^{\kappa, (c)}+\inf\lbrace t : \bar{A}^{\kappa,(c)}_{ik}(ct+\sigma_{\tilde{q}^{\kappa,(c)}})= Y_{ik}^{(c)}(\tau^{\kappa,(c)})\rbrace,
\end{equation*}
and, for $c=\infty$, this would occur at time
\begin{equation}
 S_{ik}^{\kappa,(\infty)}=\tau^{\kappa,(\infty)}+ \frac{n_i^{\kappa,(\infty)}}{\Lambda^{SN}_i(n^{\kappa,(\infty)})} Y_{ik}^{(\infty)}(\tau^{\kappa,(\infty)}). \label{Ch2: S infty}
\end{equation}
In addition, for each $i\in\mI$, let $E^\kappa_i$ be an independent exponential random variable with mean $\nu^{-1}_i$. We define $E_i^{\kappa, (c)}$ by
\begin{equation*}
 E_i^{\kappa,(c)}=\tau^{\kappa,(c)}+E^\kappa_i,\quad c\in\bN\cup \{ \infty\}.
\end{equation*}
 $E_i^{\kappa, (c)}$ denotes the next arrival of a $i$ document assuming it is uninterrupted by another arrival departure event.
From these terms, we can define the next arrival-departure event by 
\begin{equation}\label{Ch2: tau min}
 \tau^{\kappa+1,(c)}:=\min \bigg( \{S_{ik}^{\kappa, (c)} :\: k=1,...,N^{(c)}_i,\; i\in\mI \}\cup \{ E_i^{\kappa, (c)}:\: i\in\mI \} \bigg),\qquad c\in\bN\cup \{\infty\}
\end{equation}
which arrival-departure event occurs depends on which term minimises this term. Note by Assumption \ref{assump 3}, for each $c\in\bN\cup \{ \infty\}$ there is always a unique minimum of this term.

By the packet level coupling (\ref{Ch2: Q coupling}) and the flow level coupling (\ref{flow coupling 1}-\ref{flow coupling 2}), we have coupled our processes on the interval $(\tau^{\kappa,(c)},\tau^{\kappa+1,(c)})$. We now include the transition at time $\tau^{\kappa+1,(c)}$: if (\ref{Ch2: tau min}) is minimised by $S^{\kappa, (c)}_{ik}$, we define the $c$-th multi-class queueing network at time $\tau^{\kappa+1,(c)}$ by appropriately removing the $(i,k)$ document and packet from the network's state description at time $\tau^{\kappa+1,(c)}-$, and  if (\ref{Ch2: tau min}) is minimised by $ E_i^{\kappa, (c)}$ then we add a new document and packet to the flow state and packet state of the system, this document will be of (residual) size $X^{\kappa+1,(c)}_i \sim X^{(c)}_i$. Here $\{ X^{\kappa+1,(c)}_i \}_{c\in\bN\cup\{\infty\}}$ is an independent sequence of random variables satisfying,
\begin{equation}
 \frac{X_i^{\kappa+1,(c)}}{c}\xrightarrow[c\rightarrow\infty]{} X_i^{\kappa+1,(\infty)}. \label{Ch2: X kappa conv}
\end{equation}

This completes the coupling of our process on the interval $(\tau^{\kappa,(c)},\tau^{\kappa+1,(c)}]$.\\
\textbf{Proof of induction step:}\\
Given our coupling up to time $\tau^{\kappa+1,(c)}$, we now concern ourselves with proving the convergence statements (\ref{Ch2: time converge}) and (\ref{Ch2: space converge}). The following three lemmas will help to verify this.

\begin{lemmaa} \label{renewal lemma}Almost surely, for $i\in\mI$, $\eta>0$, $k=1,...,n_i$
\begin{equation*}
 \sup_{t\in [0,\eta]}\Bigg|\frac{\bar{A}^{\kappa}_{ik}(ct+\sigma_{\tilde{q}^{\kappa,(c)}})}{c}-\frac{\Lambda_i^{SN}(n^{\kappa,(c)})}{n^{\kappa,(c)}_i}t\Bigg|\xrightarrow[c\rightarrow \infty]{} 0.
\end{equation*}
\end{lemmaa}
\noindent \textit{Proof of Lemma A}.
We consider $c$ sufficiently large so that (\ref{Ch2: n to n}) holds. Let $R_{ik,\tilde{q}}(t)$ be the number times by time $t$ the $k$-th route $i$ packet has completed its route in the closed queueing network $\bar{Q}^{\kappa}$, when the closed queueing network was in state $\tilde{q}\in\tilde{\mQ}(n^{\kappa,(\infty)})$. Let $\gamma_{ik}(\tilde{q})$ be the drift of $R_{ik,\tilde{q}}$. For any Markov chain, the process that records the current state of the Markov chain and the next state, is also a Markov chain. So $R_{ik,\tilde{q}}$ is a renewal process and thus obeys the Functional Renewal Theorem. This gives that, almost surely, for all $\eta>0$
\begin{equation*}
 \max_{i\in\mI} \max_{k=1,..,n_i^{\kappa,(\infty)}} \max_{\tilde{q}\in\tilde{\mQ}(n^{\kappa,(\infty)})} \sup_{t\in [0,\eta]}\Bigg|\frac{R_{ik,\tilde{q}}(ct)}{c}-\gamma_{ik}(\tilde{q})t\Bigg|\xrightarrow[c\rightarrow \infty]{} 0.
\end{equation*}
For a proof of the Functional Renewal Theorem, see \cite[page 106]{ChYa01}. By the definition of $\bar{A}^{\kappa}_{ik}(t)$ and Corollary \ref{throughput}, we know that,
\begin{equation*}
 \bar{A}^{\kappa}_{ik}(t)=\sum_{\tilde{q}\in \tilde{\mQ}(n^{\kappa,(\infty)})}R_{ik,\tilde{q}}(t)\quad\text{and}\quad \frac{\Lambda_i^{SN}(n^{\kappa,(\infty)})}{n^{\kappa,(\infty)}_i}=\sum_{\tilde{q}\in \tilde{\mQ}(n^{\kappa,(\infty)})}\gamma_{ik}(\tilde{q}).
\end{equation*}
So, noting that $\tilde{\mQ}(n^{\kappa,(\infty)})$ is a finite set, we have that, almost surely, $\forall\, \eta >0$
\begin{align*}
&\sup_{t\in [0,\eta]}\Bigg| \frac{\bar{A}^{\kappa,(c)}_{ik}(ct)}{c}-\frac{\Lambda_i^{SN}(n^{\kappa,(\infty)})}{n^{\kappa,(\infty)}_i}t\Bigg|\\
&\leq \sum_{\tilde{q}\in \tilde{\mQ}(n^{\kappa,(\infty)})} \max_{r\in\mI} \max_{k'=1,...,n^{\kappa,(\infty)}_r}\sup_{t\in [0,\eta]}\Bigg| \frac{R_{rk',\tilde{q}}(ct)}{c}-\gamma_{rk'}(\tilde{q})t\Bigg|\xrightarrow[c\rightarrow\infty]{}0.
\end{align*}
As $\bar{Q}^{\kappa}$ is recurrent on all states in $\tilde{\mQ}(n^{\kappa,(\infty)})$, almost surely, $\sigma_{\tilde{q}} < \infty$ $\forall\, \tilde{q}\in \tilde{\mQ}(n^{\kappa,(\infty)})$. Thus, almost surely,
\begin{align*}
&\;\sup_{t\in [0,\eta]} \Bigg| \frac{\bar{A}^{\kappa}_{ik}(ct+\sigma_{\tilde{q}^{\kappa,(c)}})}{c} - \frac{\Lambda_i^{SN}(n^{\kappa,(\infty)})}{n^{\kappa,(\infty)}_i}\left(t+\frac{\sigma_{\tilde{q}^{\kappa,(c)}}}{c}-\frac{\sigma_{\tilde{q}^{\kappa,(c)}}}{c}\right) \Bigg|\\
&\leq \frac{\Lambda_i^{SN}(n^{\kappa,(\infty)})}{n^{\kappa,(\infty)}_i}\frac{\sigma_{\tilde{q}^{\kappa,(c)}}}{c} + \sup_{t\in[0,\eta+\sigma_{\tilde{q}^{\kappa,(c)}}]}\Bigg|\frac{\bar{A}^{\kappa}_{ik}(ct)}{c}-\frac{\Lambda_i^{SN}(n^{\kappa,(\infty)})}{n^{\kappa,(\infty)}_i}t\Bigg|\xrightarrow[c\rightarrow \infty]{} 0.
\end{align*}
\begin{flushright} \textit{QED Lemma A proven.} \end{flushright}
This renewal result characterises the limiting behaviour of $S^{\kappa,(c)}_{ik}$ (the time until document $k$'s departure given the current flow level state).

\begin{lemmab}\label{S conv} 
 For each $i\in\mI$ and $k=1,...,n_i^{\kappa,(\infty)}$, almost surely
\begin{equation*}
 S_{ik}^{\kappa,(c)}\xrightarrow[c\rightarrow \infty]{} S_{ik}^{\kappa,(\infty)}.
\end{equation*}
\end{lemmab}
\noindent \textit{Proof of Lemma B}.
By Lemma A and induction hypothesis (\ref{Ch2: Y kappa conv}), almost surely, $\forall\, \epsilon>0$ and $\forall\, \eta>0$ such that $\eta > S_{ik}^{\kappa,(\infty)}-\tau^{\kappa,(c)} +\frac{2\epsilon n^{\kappa,(\infty)}_i}{\Lambda_i^{SN}(n^{\kappa,(\infty)})}$, $\exists\, c'$ such that $\forall\, c>c'$,
\begin{gather}
\sup_{t\in [0,\eta]}\Bigg| \frac{\bar{A}^{\kappa,(\infty)}_{ik}(ct+\sigma_{\tilde{q}^{\kappa,(c)}}) }{c}-\frac{\Lambda_i^{SN}(n^{\kappa,(\infty)})}{n^{\kappa,(\infty)}_i}t\Bigg|<\epsilon,\\
\Bigg| \frac{Y_{ik}^{(c)}(\tau^{\kappa,(c)})}{c}- Y_{ik}^{(\infty)}(\tau^{\kappa,(\infty)}) \Bigg| < \epsilon.
\end{gather}
Hence, recalling the definition of $S_{ik}^{\kappa,(\infty)}$ in (\ref{Ch2: S infty}), the above two inequalities imply for all documents $(i,k)$
\begin{align*}
&\frac{1}{c} \bar{A}^{\kappa}_{ik}\left(c\bigg\{S_{ik}^{\kappa,(\infty)}-\tau^{\kappa,(\infty)}-\frac{2\epsilon n^{\kappa,(\infty)}_i}{\Lambda_i^{SN}(n^{\kappa,(\infty)})}\bigg\}+\sigma_{\tilde{q}^{\kappa,(c)}}\right)\\
&\leq Y_{ik}^{(\infty)}(\tau^{\kappa,(\infty)})-\epsilon
<\frac{Y_{ik}^{(c)}(\tau^{\kappa,(c)})}{c}.
\end{align*}
Thus
\begin{align*}
 S_{ik}^{(c)}-\tau^{\kappa,(c)}&=\inf\{ t\geq 0 : \bar{A}^{\kappa}_{ik}(ct+\sigma_{\tilde{q}^{\kappa,(c)}})=Y_{ik}^{(c)}(\tau^{\kappa,(c)})\}\\
 &> S_{ik}^{(\infty)}-\tau^{\kappa,(\infty)}-\frac{2\epsilon n^{\kappa,(\infty)}_i}{\Lambda_i^{SN}(n^{\kappa,(\infty)})}.
\end{align*}
By a similar argument one can see that
\begin{equation*}
S_{ik}^{(c)}-\tau^{\kappa,(c)}<S_{ik}^{(\infty)}-\tau^{\kappa,(\infty)}+\frac{2\epsilon n^{\kappa,(\infty)}_i}{\Lambda_i^{SN}(n^{\kappa,(\infty)})}.
\end{equation*}
Thus $S_{ik}^{(c)}-\tau^{\kappa,(c)}\rightarrow S_{ik}^{(\infty)}-\tau^{\kappa,(\infty)}$ as $c\rightarrow \infty$, almost surely. Thus as we assume (\ref{Ch2: tau kappa conv}) holds, almost surely 
\begin{equation*}
 S_{ik}^{(c)}\xrightarrow[c\rightarrow\infty]{} S_{ik}^{(\infty)}.
\end{equation*}
\begin{flushright} \textit{QED Lemma B proven.} \end{flushright}
Recall, that we had not ordered elements $Y_{ik}^{(c)}$ in increasing order, instead we indexed $Y_{ik}^{(c)}$ to be associated with each individual packet being processed in the closed queueing network $\bar{Q}^{\kappa}$. The following lemma helps us re-associate the desired increasing ordering of the terms $Y_{ik}^{(c)}$.

\begin{lemmac}
Let $y,y'\in\bR_+^n$ be such that $y_1\leq...\leq y_n$, $y'_1\leq ...\leq y'_n$ and let $p:\{1,...,n\}\rightarrow \{1,...,n\}$ be a permutation, then
\begin{equation}\label{Ch2: y max bound}
 \max_{k=1,...,n} |y_k-y'_k| \leq \max_{k=1,...,n} |y_k-y'_{p(k)}|.
\end{equation}
\end{lemmac}
\noindent \textit{Proof of Lemma C}.
We prove the result by induction on $n$, under the induction hypothesis that for all $\delta>0$
\begin{equation}\label{Ch2: y implies}
 |y_k-y'_{p(k)}|< \delta\:\: \forall\, k=1,...,n \quad\text{implies} \quad |y_k-y'_{k}|< \delta\:\: \forall\, k=1,...,n.
\end{equation}
The hypothesis clearly holds for $n=1$. Suppose the induction hypothesis holds for $n-1$. Take $i=p^{-1}(n)$ and $j=p(n)$. If $i=j(=n)$ then the problem clearly reduces to the $n-1$ case. Assume $i\neq j$. We know
\begin{equation*}
y_n\geq y_i \quad y'_n \geq y'_j.
\end{equation*}
Also, if
\begin{gather*}
 |y_n-y'_j|<\delta\quad \text{and}\quad |y_i-y'_n|<\delta
\end{gather*}
then
\begin{gather}
 y_n \geq y_i > y'_n -\delta\quad \text{and} \quad y'_n \geq y'_j > y_n -\delta\notag\\
\text{therefore}\:\: |y_n-y'_n| < \delta.\label{Ch2: y seq}
\end{gather}
Similarly,
\begin{gather}
 y_i > y'_n -\delta\geq y'_k \quad \text{and} \quad y'_j > y_n -\delta\geq y_i-\delta \notag\\
\text{therefore}\:\: |y_i-y'_j| < \delta. \label{Ch2: y no seq}
\end{gather}
We can now define a new permutation on $\{1,...,n-1\}$,
\begin{equation*}
p'(k)= 
\begin{cases}
p(k) & \text{if } k\neq i\\
j & \text{if } k=i.
 \end{cases}
\end{equation*}
Since (\ref{Ch2: y no seq}) holds we have reduced this problem to a problem on $n-1$ variables with equality (\ref{Ch2: y seq}) still holding. This completes the proof of our induction hypothesis. Since $\delta$ is arbitrary it is clear that (\ref{Ch2: y implies}) is equivalent to (\ref{Ch2: y max bound}).
\begin{flushright} \textit{QED Lemma C proven.} \end{flushright}

By Lemma B, induction hypothesis \eqref{Ch2: tau kappa conv} and the definition of $\tau^{k+1,(c)}$ (\ref{flow coupling 1}-\ref{flow coupling 2}), we know that $$\tau^{\kappa+1,(c)}\rightarrow \tau^{\kappa+1,(\infty)}\quad\text{ as }\quad c\rightarrow \infty.$$ By Assumption \ref{assump 3}, $\tau^{\kappa+1,(\infty)}$ is achieved by a distinct minimum and consequently there exists a $c''$ such that $\forall\, c>c''$ the argument attaining $\tau^{\kappa+1,(c)}$ in (\ref{Ch2: tau min}) is the same as that attaining $\tau^{\kappa+1,(\infty)}$. Thus the coupled processes will have the same document arrival-departure event occur at time $\tau^{\kappa+1,(c)}$, $\forall\, c\in\{c''+1,...,\infty\}$.

We can now verify (\ref{Ch2: time converge}) from the induction hypothesis:
\begin{align}
&\sup_{t\in (\tau^{\kappa,(c)},\tau^{\kappa+1,(c)}\wedge T]} |\lambda^{\kappa,(c)}(t)-t |\notag\\
&=|\tau^{\kappa,(\infty)}-\tau^{\kappa,(c)}|\vee|\tau^{\kappa+1,(\infty)}\wedge T-\tau^{\kappa+1,(c)}\wedge T|\xrightarrow[c\rightarrow\infty]{} 0. \label{Ch2: lambda conv}
\end{align}

We can also prove (\ref{Ch2: space converge}) from the induction hypothesis. We use the following set of inequalities which will subsequently be explained, $\forall\, c>c''$,
\begin{align}
 &\sup_{t\in(\tau^{\kappa,(c)},\tau^{\kappa+1,(c)}\wedge T]} \label{Ch2: line 1}
\bigg|\bigg| Y^{(\infty)}(\lambda^{\kappa,(c)}(t))-\frac{Y^{(c)}(t)}{c} \bigg|\bigg|\\
&\leq 
 \max_{i\in\mI} \max_{k=1,...,n_i^{\kappa,(\infty)}} \sup_{t\in (\tau^{\kappa,(c)},\tau^{\kappa+1,(c)}\wedge T]} \bigg|Y^{(\infty)}_{ik}(\lambda^{\kappa,(c)}(t))-\frac{Y^{(c)}_{ik}(t)}{c}\bigg| \label{Ch2: line 2}\\
&\qquad\quad + \max_{i\in\mI} \max_{k=1,...,n_i^{\kappa,(\infty)}}\bigg|X_{ik}^{\kappa+1,(\infty)}-\frac{X_{ik}^{\kappa+1,(c)}}{c}\bigg|\notag\\ 
&\leq \max_{i\in\mI} \max_{k=1,...,n_i^{\kappa,(\infty)}} \Bigg[ \bigg|Y^{(\infty)}_{ik}(\tau^{\kappa,(\infty)})-\frac{Y^{(c)}_{ik}(\tau^{\kappa,(c)})}{c}\bigg|\notag\\
&\qquad+ \frac{\Lambda^{SN}_i(n^{\kappa,(\infty)})}{n^{\kappa,(\infty)}_i}\times \sup_{t\in (\tau^{\kappa,(c)},\tau^{\kappa+1,(c)}\wedge T]}\big| \lambda^{\kappa,(c)}(t)-t \big|\notag\\
&\qquad + \sup_{t\in (\tau^{\kappa,(c)},\tau^{\kappa+1,(c)}]} \bigg| \frac{\Lambda^{SN}_i(n^{\kappa,(\infty)})}{n^{\kappa,(\infty)}_i}(t-\tau^{\kappa,(c)}) - \frac{\bar{A}^{\kappa}_{ik}(c\{t-\tau^{\kappa,(c)}\} + \sigma_{\tilde{q}^{\kappa,(c)}})}{c}\bigg|\notag\\
&\qquad\quad\quad +\frac{\Lambda_i^{SN}(n^{\kappa,(\infty)})}{n_i^{\kappa,(\infty)}} |\tau^{\kappa,(c)}-\tau^{\kappa,(\infty)}|\Bigg]\notag \\
&
\qquad \quad\qquad+\max_{i\in\mI} \max_{k=1,...,n_i^{\kappa,(\infty)}} \bigg|X_{ik}^{\kappa+1,(\infty)}-\frac{X_{ik}^{\kappa+1,(c)}}{c}\bigg| \label{Ch2: line 3}\\
&\qquad\quad\qquad \xrightarrow[c\rightarrow \infty]{} 0\notag
\end{align}
In the first inequality, we apply Lemma C so that we index each packet according to its position within closed queueing network $\bar{Q}^{\kappa}$ (see description of (\ref{flow coupling 1}) and (\ref{Ch2: Q coupling}). This is so residual file sizes $Y^{(c)}_{ik}(t)$ are not necessarily indexed to be increasing. Also the first inequality over estimates (\ref{Ch2: line 1}) by including the file sizes of all possible arrivals that could occur at time $\tau^{\kappa+1,(c)}$. In the second inequality we apply the triangle inequality to (\ref{Ch2: line 2}) by using the two facts,
\begin{align*}
 Y^{(\infty)}_{ik}(\lambda^{\kappa,(c)}(t))&=Y^{(\infty)}_{ik}(\tau^{\kappa,(\infty)})\\
&-\frac{\Lambda^{SN}_i(n^{\kappa,(\infty)})}{n^{\kappa,(\infty)}_i} \bigg( \{\lambda^{\kappa, (c)}(t)-t\} +\{t -\tau^{\kappa,(c)}\}+ \{\tau^{\kappa,(c)}- \tau^{\kappa,(\infty)}\} \bigg)\\
 Y^{(c)}_{ik}(t)&=Y^{(c)}_{ik}(\tau^{\kappa,(c)})-\bar{A}^{\kappa}_{ik}(c\{t-\tau^{\kappa,(c)}\} + \sigma_{\tilde{q}^{\kappa,(c)}}).
\end{align*}
for $t\in (\tau^{\kappa,(c)},\tau^{\kappa+1,(c)}\wedge T]$. The first expression in equation (\ref{Ch2: line 3}) converges to $0$ by induction assumption (\ref{Ch2: Y kappa conv}); the second expression converges to $0$ by (\ref{Ch2: lambda conv}); the third term converges by (\ref{Ch2: tau kappa conv}); the fourth converges by Lemma A and fifth term converges by expression (\ref{Ch2: X kappa conv}). We have thus demonstrated (\ref{Ch2: time converge}) and (\ref{Ch2: space converge}) hold. This verifies our induction hypothesis. 

Our induction argument is sufficient to couple our process on interval $[0,T]$. Since there are almost surely a finite number of documents in transfer at time $t=0$ and a finite number of document arrivals in interval $[0,T]$, it must be that
\begin{equation*}
 \{\kappa : \tau^{\kappa,(\infty)}<T \} \quad \text{is bounded almost surely.}
\end{equation*}
Since we have proven that almost surely $\tau^{\kappa,(c)}\rightarrow \tau^{\kappa, (\infty)}$ as $c\rightarrow \infty$, for all $\kappa\in\bZ_+$,
\begin{equation*}
  \{\kappa : \tau^{\kappa,(c)}<T \} \quad \text{is uniformly bounded over } c\in\bN\cup\{\infty\} \text{ almost surely.}
\end{equation*}
Thus by our inductive argument, we may couple our process $\{Y^{(c)}\}_{c\in\bN\cup\{\infty\}}$ on the interval $[0,T]$.

\noindent \textbf{Skorohod convergence:}\\
Taking $\lambda^{(c)}(t)=\lambda^{\kappa,(c)}(t)$ for $\, t\in [\tau^{\kappa,(c)}, \tau^{\kappa+1,(c)}\wedge T ]$. We have by statements (\ref{Ch2: time converge}) and (\ref{Ch2: space converge}) that
\begin{gather*}
 \sup_{t\in [0,T]} \big| \lambda^{(c)}(t) -t | \xrightarrow[c\rightarrow\infty]{} 0\\
\sup_{t\in [0,T]} \Big|\Big| Y^{(\infty)}(\lambda^{(c)}(t)) - \frac{Y^{(c)}(t)}{c} \Big|\Big| \xrightarrow[c\rightarrow\infty]{} 0.
\end{gather*}
Thus, almost surely, we have convergence in the Skorohod topology on $[0,T]$,
\begin{equation*}
 \frac{Y^{(c)}}{c}\xrightarrow[c\rightarrow \infty]{} Y^{(\infty)}.
\end{equation*}
Since, the Skorohod convergence occurs almost surely in this coupling, for all continuous bounded functions $f:D[0,t]\rightarrow \bR_+$,
\begin{equation*}
 f\bigg(\frac{Y^{(c)}}{c}\bigg)\xrightarrow[c\rightarrow \infty]{} f(Y^{(\infty)}), \qquad \text{almost surely}.
\end{equation*}
Thus by the Bounded Convergence Theorem,
\begin{equation*}
 \bE f\bigg(\frac{Y^{(c)}}{c}\bigg)\xrightarrow[c\rightarrow \infty]{} \bE f(Y^{(\infty)})
\end{equation*}
or, in other words, $\frac{Y^{(c)}}{c}$ converges weakly to $Y^{(\infty)}$ in the Skorohod topology. This completes the proof of Theorem \ref{spinning converge}.
\end{proof}

\section{Insensitivity of spinning networks}\label{Ch2: Insensitivity Sec}

The insensitivity of the spinning network is a consequence of Theorem \ref{spinning converge}. To prove this we will first require two technical lemmas.

\begin{lemma}\label{Ch2: X bar lemma}
Let $(X^{(c)}: c\in\bN\cup \{ \infty \})$ be a sequence of random variables. Let $X^{(c)}$ have values in $\bN$ and mean $\frac{c}{\mu}$ for $c\in \bN$ . Let $X^{(\infty)}$ have values in $\bR_+$ and mean $\frac{1}{\mu}$. Define random variables $(\bar{X}^{(c)}: c\in\bN\cup \{ \infty \})$ by
\begin{align}
 \bP(\bar{X}^{(c)}\leq y) &= \frac{\mu}{c} \sum_{z=1}^y \bP(X^{(c)}\geq z),\qquad c\in\bN,\notag \\
\bP( \bar{X}^{(\infty)} \leq y) &= \mu\int_0^y \bP(X^{(\infty)} \geq z) dz.\label{Ch2: X bar def}
\end{align}
If
\begin{equation*}
 \frac{X}{c}^{(c)} \Rightarrow X^{(\infty)} \quad \text{as} \quad c\rightarrow \infty,
\end{equation*}
then
\begin{equation}
 \frac{\bar{X}}{c}^{(c)} \Rightarrow \bar{X}^{(\infty)}\quad \text{as} \quad c\rightarrow \infty. \label{Ch2: X bar converges}
\end{equation}
\end{lemma}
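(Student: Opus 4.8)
The plan is to pass to Laplace transforms, which converts the "stationary excess" operation $X\mapsto\bar X$ into a one-line algebraic identity and reduces the whole lemma to an elementary limit. First I would record that the two recipes in \eqref{Ch2: X bar def} genuinely define probability laws. For $c\in\bN$ one has $\bP(\bar X^{(c)}=z)=\bP(\bar X^{(c)}\le z)-\bP(\bar X^{(c)}\le z-1)=\frac{\mu}{c}\bP(X^{(c)}\ge z)$, and summing over $z$ gives $\frac{\mu}{c}\bE X^{(c)}=1$; similarly $\bar X^{(\infty)}$ has density $\mu\,\bP(X^{(\infty)}\ge x)$ on $(0,\infty)$, which integrates to $\mu\,\bE X^{(\infty)}=1$. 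This is the only place the finite-mean hypotheses enter.

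Next, fix $\theta>0$ and compute the transforms explicitly. Summation by parts on the double sum $\sum_{z\ge1}e^{-\theta z/c}\sum_{u\ge z}\bP(X^{(c)}=u)$ (interchange justified by nonnegativity) together with the geometric series $\sum_{z=1}^{u}e^{-\theta z/c}=e^{-\theta/c}\frac{1-e^{-\theta u/c}}{1-e^{-\theta/c}}$ yields
\begin{equation*}
\bE\!\left[e^{-\theta\bar X^{(c)}/c}\right]=\frac{\mu}{c}\,\frac{e^{-\theta/c}}{1-e^{-\theta/c}}\left(1-\bE\!\left[e^{-\theta X^{(c)}/c}\right]\right),
\end{equation*}
and the analogous continuous computation gives $\bE[e^{-\theta\bar X^{(\infty)}}]=\frac{\mu}{\theta}(1-\bE[e^{-\theta X^{(\infty)}}])$. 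Now let $c\to\infty$: the prefactor satisfies $\frac{1}{c}\cdot\frac{e^{-\theta/c}}{1-e^{-\theta/c}}\to\frac{1}{\theta}$ because $c(1-e^{-\theta/c})\to\theta$ and $e^{-\theta/c}\to1$; and since $x\mapsto e^{-\theta x}$ is bounded and continuous on $\bR_+$, the hypothesis $X^{(c)}/c\Rightarrow X^{(\infty)}$ gives $\bE[e^{-\theta X^{(c)}/c}]\to\bE[e^{-\theta X^{(\infty)}}]$. Combining, $\bE[e^{-\theta\bar X^{(c)}/c}]\to\frac{\mu}{\theta}(1-\bE[e^{-\theta X^{(\infty)}}])=\bE[e^{-\theta\bar X^{(\infty)}}]$ for every $\theta>0$.

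Finally I would invoke the continuity theorem for Laplace transforms of nonnegative random variables: pointwise convergence on $(0,\infty)$ of $\bE[e^{-\theta\bar X^{(c)}/c}]$ to the Laplace transform of the (bona fide, by the first step) probability law of $\bar X^{(\infty)}$ forces $\bar X^{(c)}/c\Rightarrow\bar X^{(\infty)}$, which is \eqref{Ch2: X bar converges}.

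I expect the only friction to be bookkeeping: checking the prelimit transform identity cleanly and expanding the prefactor to the right order in $\theta/c$; there is no real conceptual obstacle. As an alternative to the continuity theorem one could argue directly, writing $\bP(\bar X^{(c)}/c\le y)=\frac{\mu}{c}\sum_{z\le cy}\bP(X^{(c)}\ge z)$ as an integral of $s\mapsto\bP(X^{(c)}/c\ge\lceil cs\rceil/c)$ over $[0,y]$ and applying bounded convergence; there the delicate step is that the survival functions $\bP(X^{(c)}/c\ge\cdot)$ converge only at continuity points of the limit and are evaluated on a $c$-dependent grid, which must be handled by sandwiching the moving grid point between fixed continuity points and using monotonicity. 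The Laplace-transform route avoids this entirely, so I would take it as the main line of argument.
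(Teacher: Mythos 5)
Your proof is correct, but it takes a genuinely different route from the paper's. The paper argues directly on distribution functions: it rewrites $\bP(\bar X^{(c)}/c\le y)=\frac{\mu}{c}\sum_{z=1}^{\lfloor cy\rfloor}\bP(X^{(c)}\ge z)$ as $\mu\int_{1/c}^{\lceil cy\rceil/c}\bP(X^{(c)}/c\ge z)\,dz$ and passes to the limit by bounded convergence, using exactly the observation you flag as the delicate point of the ``alternative'' route — that $\bP(X^{(\infty)}\ge\cdot)$ has at most countably many discontinuities, so the integrands converge Lebesgue-almost everywhere. Your Laplace-transform argument is sound at every step: the prelimit identity $\bE[e^{-\theta\bar X^{(c)}/c}]=\frac{\mu}{c}\frac{e^{-\theta/c}}{1-e^{-\theta/c}}(1-\bE[e^{-\theta X^{(c)}/c}])$ follows from Tonelli and the geometric sum, the prefactor tends to $\mu/\theta$, the hypothesis gives convergence of $\bE[e^{-\theta X^{(c)}/c}]$, and your preliminary check that $\bar X^{(\infty)}$ has total mass one (via $\mu\,\bE X^{(\infty)}=1$) is precisely what the continuity theorem needs to exclude escape of mass. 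What your route buys is that it never touches continuity points or the $c$-dependent grid; what it costs is the appeal to the continuity theorem for Laplace transforms, where the paper needs only bounded convergence plus the countable-discontinuity remark. Either argument is acceptable; yours is arguably cleaner to verify line by line, while the paper's is shorter and more self-contained.
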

\begin{proof}
$\bP( X^{(\infty)}\geq z)$ can only have countably many points of discontinuity. Thus by integration by substitution and the Bounded Convergence Theorem, we have that, for all $y\in\bR_+$,
\begin{align*}
 \;\;&\bP\Big(\frac{\bar{X}^{(c)}}{c}\leq y\Big)
 =\bP\Big(\bar{X}^{(c)}\leq \lfloor cy \rfloor\Big)
=  \frac{\mu}{c} \int^{\lceil cy \rceil}_1 \bP(X^{(c)} \geq z )dz\\
&= \mu\int^{\lceil cy \rceil/c}_{1/c} \bP\Big(\frac{X^{(c)}}{c} \geq z \Big)dz
 \xrightarrow[c\rightarrow\infty]{} \mu\int^{y}_{0} \bP(X^{(\infty)} \geq z )dz=\bP(\bar{X}^{(\infty)} \leq y).
\end{align*}
\end{proof}
To prove Theorem \ref{spinning converge}, we assumed no simultaneous arrival-departure events occurred. We now demonstrate that these assumptions hold for the case of a spinning network, with non-atomic document sizes.
\begin{lemma}\label{Ch2: no collision}
 Suppose the initial distribution $Y(0)$ conditional of $N(0)$ consists of independent non-atomic random variables $Y_{ik}$ $k=1,...,N_i(0), i\in\mI$. Given documents size distributions $X_i, i\in\mI$ are non-atomic, then, almost surely,\\
a) There are no simultaneous document arrival-departure events, i.e. Assumption \ref{assump 3} holds.\\
b) For all $t\in\bR_+$, almost surely, no document arrival-departure event occurs at time $t$.
\end{lemma}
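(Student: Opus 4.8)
The plan is to reduce both statements to two elementary facts: independent Poisson processes almost surely never jump simultaneously, and the time at which a given document completes its transfer has, after a suitable conditioning, a non-atomic law. Enumerate the arrival--departure events of $Y$ as $\tau^0=0<\tau^1<\tau^2<\cdots$. Between consecutive events the process evolves deterministically, so with $n=N(\tau^\kappa)$ and the residual sizes $y=Y(\tau^\kappa)$ fixed, the candidate departure times are $S_{ik}=\tau^\kappa+\tfrac{n_i}{\Lambda^{SN}_i(n)}y_{ik}$ (cf.\ (\ref{Ch2: S infty})) and the candidate arrival times are $\tau^\kappa$ plus independent rate-$\nu_i$ exponentials, and $\tau^{\kappa+1}$ is the minimum of all of these, exactly as in (\ref{Ch2: tau min}). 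Since $\Lambda^{SN}_i(n)=B_{n-e_i}/B_n\in(0,\infty)$ whenever $n_i>0$ (each $B_m$ is a finite sum of strictly positive terms), each $S_{ik}$ is a strictly increasing, finite-slope affine image of a single residual size.

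The argument is an induction on $\kappa$ carried with the following hypothesis. Let $\mathcal{A}_\kappa$ be the $\sigma$-algebra generated by the times and types of $\tau^1,\dots,\tau^\kappa$, the identity of the document removed at each departure event among them, and the initial or arrival sizes of all documents that have already departed by $\tau^\kappa$; thus $\mathcal{A}_\kappa$ determines the set $D_\kappa$ of documents present at $\tau^\kappa$, but not their residual sizes. The hypothesis is that, conditionally on $\mathcal{A}_\kappa$, the ``birth values'' $(B_d:d\in D_\kappa)$ --- where $B_d$ is the relevant $Y_{ik}(0)$ for a document present at time $0$, or the relevant copy of $X_i$ for one that arrived in transfer --- are independent, and the conditional law of $B_d$ is its unconditional (non-atomic) law restricted to the half-line $(\Phi_d,\infty)$ and renormalised, where $\Phi_d\ge 0$, the amount of $d$ transferred by $\tau^\kappa$, is $\mathcal{A}_\kappa$-measurable. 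This conditioning is legitimate precisely because a present document's birth value has not yet influenced the dynamics, so the history records nothing about it beyond the event that it has not departed; and a non-atomic law restricted to a half-line is again non-atomic. The hypothesis holds trivially at $\kappa=0$ by the assumed conditional law of $Y(0)$ given $N(0)$.

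Granting the hypothesis at stage $\kappa$, and recalling that by the memoryless property the candidate arrival increments are independent rate-$\nu_i$ exponentials independent of $\sigma(\mathcal{A}_\kappa,(B_d:d\in D_\kappa))$, every two distinct candidate times are a.s.\ unequal: two arrival candidates are independent and non-atomic; an arrival candidate and some $S_{ik}$ differ by an independent non-atomic quantity; and two distinct $S$-candidates belong to distinct documents $d\neq d'$, so their difference is, up to an $\mathcal{A}_\kappa$-measurable constant, $c\,B_d-c'\,B_{d'}$ with $c,c'>0$ and $B_d,B_{d'}$ independent non-atomic, hence non-atomic. Summing over the a.s.\ finitely many indices, $\tau^{\kappa+1}$ is a.s.\ attained by a unique candidate, $\tau^{\kappa+1}>\tau^\kappa$, and $\tau^{\kappa+1}$ is not a simultaneous arrival--departure; the same estimate gives $\bP(\tau^{\kappa+1}=t\mid\mathcal{A}_\kappa)=0$ for each fixed $t$. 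To close the induction one re-establishes the hypothesis at stage $\kappa+1$: if $\tau^{\kappa+1}$ is an arrival the new document carries a fresh, independent, non-atomic copy of $X_i$; if it is the departure of $d^\ast$ then $B_{d^\ast}$ becomes $\mathcal{A}_{\kappa+1}$-measurable, while conditionally on $\mathcal{A}_{\kappa+1}$ each surviving $B_d$ is merely its stage-$\kappa$ conditional law further restricted to $(\Phi_d',\infty)$, with $\Phi_d'$ the $\mathcal{A}_{\kappa+1}$-measurable amount transferred by $\tau^{\kappa+1}$; independence and non-atomicity persist. I expect this bookkeeping --- choosing $\mathcal{A}_\kappa$ correctly and checking that these regular conditional laws are well defined and update cleanly --- to be the main obstacle; the rest is a one-line computation.

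Finally, almost surely only countably many events ever occur: on any finite interval there are finitely many Poisson arrivals, and each of the (a.s.\ finitely many) documents present in $[0,t]$ departs at most once, so there are finitely many events in $[0,t]$ and countably many overall. Part~(a) then follows by a countable union over $\kappa$ of the null events ``$\tau^\kappa$ is a simultaneous arrival--departure'', which is Assumption~\ref{assump 3}. For part~(b), fix $t$; then $\bP(\text{an event occurs at }t)\le\sum_\kappa\bP(\tau^\kappa=t)=0$.
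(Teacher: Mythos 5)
Your proposal is correct in strategy but takes a genuinely different route from the paper's. You run an induction over the arrival--departure events, maintaining the hypothesis that, conditionally on the history of event times, event types and departed documents' sizes, the birth values of the surviving documents remain independent with their original non-atomic laws restricted to half-lines and renormalised; a.s.\ uniqueness of the minimising clock at each step then follows because differences of independent non-atomic variables are non-atomic. The paper avoids this conditional-law bookkeeping entirely: arrival collisions are ruled out directly from the Poisson structure (the next interarrival increment is independent of the past trajectory), and for departure collisions it introduces, for each document $k$, the modified process $Y'$ in which $k$ never departs; since $Y=Y'$ strictly before $D_k$ and, conditionally on the trajectory of $Y'$, the departure time $D_k$ is a strictly increasing deterministic function of the independent non-atomic size $X_k$, $D_k$ a.s.\ misses the countable set of $Y'$-departure epochs and any fixed time $t$. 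The paper's argument is shorter and sidesteps the regular-conditional-distribution verification that you correctly flag as your main remaining obstacle (it is doable --- the event recorded by your $\mathcal{A}_{\kappa+1}$ factorises over the independent clocks --- but tedious); your induction, once that bookkeeping is carried out, is more self-contained, makes explicit the key structural point that the transfer rates depend only on the counts $N$ and hence leak no information about a surviving size beyond a survival event, and delivers as by-products the strict ordering of consecutive events and their a.s.\ finiteness on bounded intervals.
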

\begin{proof}
Let $Y=(Y_t:t\in\bR_+)$ be the spinning networks flow level process description. Since arrivals $A_1,A_2,...$ form a Poisson process almost surely no two arrivals occur at the same time and for each $t\in\bR_+$ almost surely no arrival occurs at time $t$. Since exponential random variable $A_k-A_{k-1}$ is independent of $(Y_t: t\leq A_{k-1})$, there is zero probability that an arrival $A_k$ coincides with departures. Therefore, an arrival cannot coincide with a departure.

It remains to show that no two document departures may occur simultaneously. Let $D_k$ be the departure of some document $k$ of initial size $X_k$ (or initial residual size $Y_{ik}$ at time 0). Let $A_k$ be that document's arrival time (take $A_k=0$ if the document is present at time zero). Let $Y'$ be the process derived from $Y$ in which document $k$ never departs the SFLM (i.e. behaving as if $X_k=\infty$). Note that $Y'(t)=Y(t)$ for all $t<D_k$ and $D_k$ coincides with a document departure in $Y$ iff $D_k$ coincides with a departure in $Y'$. Note that as that $X_k$ is conditionally independent of $(Y'(t): t>A_k)$ conditional on $(Y'(t): t\leq A_k)$ and $D_k$ is non-atomic as it is a strictly increasing function of non-atomic independent random variable $X_k$. Thus, conditional on $(Y'(t): t\leq A_k)$ the probability that non-atomic random variable $D_k$ coincides with the countable set of departure events in $(Y'(t): t>A_k)$ or at a specific time $t\in\bR_+$ is zero. Thus, the probability two departure events coincide is zero and the probability that departure occurs at a specific time $t$ is zero.
\end{proof}

We can now prove one of the main results of this chapter: the insensitivity of the spinning network. 

\begin{corollary}\label{Ch2: insensitive coroll}
Given Assumption \ref{MQNwS assump}, 
the spinning network has a stationary distribution which is insensitive to all non-atomic document size distributions.
\end{corollary}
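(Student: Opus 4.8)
The plan is to realise the generalised SFLM with non-atomic arrival sizes as a Skorohod limit of multi-class queueing networks with spinning and to carry across the \emph{exact} discrete insensitivity of those prelimit networks. Fix non-atomic positive random variables $X^{(\infty)}_i$, $i\in\mI$, with means $\mu_i^{-1}$, and let $\pi_N$ be the distribution \eqref{ed N}; this is the law for which insensitivity is to be checked (it is in particular the stationary law of $N$ in the exponential spinning network, i.e.\ the exponential special case of the present claim). For each $c\in\bN$ I would first build an $\bN$-valued $X^{(c)}_i$ with mean exactly $c\mu_i^{-1}$ and $X^{(c)}_i/c\Rightarrow X^{(\infty)}_i$: randomly round $cX^{(\infty)}_i$ to an adjacent integer and, on the asymptotically certain event $\{\lceil cX^{(\infty)}_i\rceil\ge 2\}$, subtract $1$ with a small probability chosen to restore the mean; then $|X^{(c)}_i-cX^{(\infty)}_i|\le 2$ surely, which forces the weak convergence. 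Let $Q^{(c)}$ be the MQNwS with these document sizes and the scalings $\phi^{(c)}_j=c\phi_j$ of Section \ref{sec:spinning conv}, and $Q^{(\infty)}=Y^{(\infty)}$ the spinning network with arrival sizes $X^{(\infty)}_i$.

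Two computations drive the argument. First, since $\phi^{(c)}_j(l)=c\phi_j(l)$, $\rho^{(c)}_i=c\rho_i$ and $\sum_j m_j=\sum_i n_i$ is constant on $S(n)$, one gets $B^{(c)}_n=c^{-\sum_i n_i}B_n$ and $B^{(c)}=B$, so Corollary \ref{CH1: N ed thrm} gives $\bP(N^{(c)}=n)=\frac{B_n}{B}\prod_i\rho_i^{n_i}=\pi_N(n)$ for \emph{every} $c$ and \emph{every} admissible discrete law; the same identity shows each $Q^{(c)}$ is ergodic under Assumption \ref{MQNwS assump}. Second, starting $Q^{(c)}$ in stationarity, \eqref{Ch2:ed Y} together with $\bP(N^{(c)}=n)=\pi_N(n)$ shows that conditionally on $N^{(c)}=n$ the residual sizes are, route by route, i.i.d.\ with pmf $y\mapsto(\mu_i/c)\bP(X^{(c)}_i\ge y)$, i.e.\ with the law of $\bar X^{(c)}_i$ from Lemma \ref{Ch2: X bar lemma}. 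By Lemma \ref{Ch2: X bar lemma}, $\bar X^{(c)}_i/c\Rightarrow\bar X^{(\infty)}_i$, which is absolutely continuous; conditioning on the ($c$-independent, discrete) law $\pi_N$ of $N^{(c)}$ then yields $Y^{(c)}(0)/c\Rightarrow Y^{(\infty)}(0)$, where $Y^{(\infty)}(0)$ has $N\sim\pi_N$ and, given $N=n$, non-atomic residual sizes distributed as $\bar X^{(\infty)}_i$. By Lemma \ref{Ch2: no collision} this initial law makes Assumption \ref{assump 3} hold for $Y^{(\infty)}$ (the independence-versus-ordering point is immaterial, since that proof uses only that each initial residual size is non-atomic, and order statistics of a non-atomic i.i.d.\ sample are non-atomic), while Assumption \ref{assump 2}, which concerns the closed networks derived from $Q^{(1)}$, is unaffected by the scaling $\phi^{(c)}_j=c\phi_j$.

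With the hypotheses of Theorem \ref{spinning converge} verified, $Y^{(c)}/c\Rightarrow Y^{(\infty)}$ in the Skorohod topology on every $[0,T]$. I would then read off the conclusion. For fixed $t$, Lemma \ref{Ch2: no collision}(b) makes $t$ almost surely a continuity point of $Y^{(\infty)}$, so $Y^{(c)}(t)/c\Rightarrow Y^{(\infty)}(t)$; the map $y\mapsto N(y)$ is continuous on $(\mY,\|\cdot\|)$ because the $n$-sheets are clopen under \eqref{Ch2: Y norm}, hence $N^{(c)}(t)\Rightarrow N^{(\infty)}(t)$, and since $N^{(c)}(t)\sim\pi_N$ for all $c$ and $t$ we obtain $N^{(\infty)}(t)\sim\pi_N$ for all $t\ge 0$. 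Likewise $Y^{(c)}(t)\stackrel{d}{=}Y^{(c)}(0)$ by stationarity, and letting $c\to\infty$ at such $t$ gives $Y^{(\infty)}(t)\stackrel{d}{=}Y^{(\infty)}(0)$; in the one-dimensional-marginal sense of Section \ref{Bandwidth networks} this means $\pi_Y:=\mathrm{law}(Y^{(\infty)}(0))$ is a stationary distribution of the generalised SFLM with $\bP_{\pi_Y}(N(0)=n)=\pi_N(n)$ for all $n$. Since $\pi_N$ does not depend on the chosen $X^{(\infty)}_i$, insensitivity follows.

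I expect the main obstacle to be the second step: identifying the weak limit of the scaled \emph{stationary} flow-level states $Y^{(c)}(0)/c$ and confirming that the limiting initial law is non-atomic, since this single input simultaneously supplies the convergence hypothesis of Theorem \ref{spinning converge} and the hypothesis of Lemma \ref{Ch2: no collision}. The explicit product form \eqref{Ch2:ed Y} and the equilibrium-distribution convergence of Lemma \ref{Ch2: X bar lemma} are exactly what make this work; by comparison, the discrete approximation of $X^{(\infty)}_i$ and the passage from pathwise Skorohod convergence to invariance of one-dimensional marginals are routine.
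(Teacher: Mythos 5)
Your proposal is correct and follows essentially the same route as the paper: realise the prelimit stationary flow-level state via the product form of Corollary \ref{Ch2: packet flow corol}, identify the conditional residual-size law with $\bar X^{(c)}_i$ and invoke Lemma \ref{Ch2: X bar lemma} to converge the initial condition, then combine Theorem \ref{spinning converge} with Lemma \ref{Ch2: no collision} and the continuity of marginals at fixed $t$ to transfer stationarity (and hence the mean-only dependence of $\pi_N$) to the limit. Your extra details --- the explicit rounding construction of $X^{(c)}_i$, the clopen-sheet continuity of $y\mapsto N(y)$, and the remark that Lemma \ref{Ch2: no collision} only needs non-atomicity of each residual size --- are refinements of steps the paper leaves implicit, not a different argument.
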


\begin{proof}
We can take document sizes $X^{(c)}_i$ such that $\bE X^{(c)}_i = \frac{c}{\mu_i}$ and $\frac{X_i}{c}^{(c)}\Rightarrow X_i^{(\infty)}$.
As in Theorem \ref{spinning converge}, we consider a sequence of multi-class queueing networks with spinning associated with these document size distributions and with queue service rates $c\phi_j(\cdot)$.
From Corollary \ref{CH1: N ed thrm} and Corollary \ref{Ch2: packet flow corol}, the prelimit stationary distribution of $Y^{(c)}$ and $N^{(c)}$, $c\in\bN$ is
\begin{align}
&\bP(Y^{(c)}(0)=y)\notag\\
&=\frac{B_n}{B}\prod_{i\in\mI}\left( \begin{array}{cc} n_i \\ n_{iy}\: : \: y\in\bN \end{array}\right) \prod_{k=1}^{n_i} \big(\nu_i \bP(X_i\geq y_{ik})\big),\qquad \forall\, y\in\mY,\; t\in\bR_+,\\
&\bP(N^{(c)}(0)=n)=\frac{B_n}{B} \prod_{i\in\mI} \rho_i^{n_i},\qquad \forall\,\;n\in\bZ_+^I. \label{Ch2: N insens}
\end{align}
We can construct $Y^{(c)}(0)$, by taking a vector $N(0)$ according to distribution (\ref{Ch2: N insens}) then, for each $i\in\mI$ and $k=1,...,N_i(0)$, $Y_{ik}^{(c)}(0)$ is taken by selecting and ordering independent random variables $\bar{X}^{(c)}_{i}$, where $\bar{X}_i^{(c)}$ is defined from $X_i^{(c)}$ by (\ref{Ch2: X bar def}). Given Lemma \ref{Ch2: X bar lemma}, 
\begin{equation}\label{Y conv insens}
\frac{Y^{(c)}(0)}{c}\Rightarrow Y^{(\infty)}(0)\qquad\text{as}\qquad c \rightarrow \infty, 
\end{equation}
where $Y^{(\infty)}(0)$ has density
\begin{equation}\label{Ch2: gsflm density}
 \frac{B_n}{B} \prod_{i\in\mI} n_i!\prod_{k=1}^{n_i} \big(\rho_i  \bP(\bar{X}^{(\infty)}_i\in dx_{ik})\big),
\end{equation}
and also 
\begin{equation}\label{Ch2: N insens 2}
bP(N^{(\infty)}(0)=n)=\frac{B_n}{B} \prod_{i\in\mI} \rho_i^{n_i},
\end{equation}
$\forall\,\;n\in\bZ_+^I$ and for $x_{ik}\leq x_{ik+1}$ $k=1,...,n_i-1$, $i\in\mI$. Note the above expression for $N^{(\infty)}$ depends on $X^{(\infty)}_i$ only through its mean. Hence if \eqref{Ch2: gsflm density} is the stationary distribution for our limit process then this distribution must be insensitive.

We now show that \eqref{Ch2: gsflm density} provides a stationary distribution.
It is known that if a sequence of processes weakly converge in the Skorohod topology and if, almost surely, there is not jump at time $t$ then the marginal distribution at time $t$ must weakly converge, see \cite[Theorem 12.5]{Bi99}.
By  Lemma \ref{Ch2: no collision}, almost surely no jump occurs at time $t$ for $N^{(\infty)}$ and, by Theorem \ref{spinning converge}, $N^{(c)}\Rightarrow N^{(\infty)}$ as $c\rightarrow\infty$ in the Skorohod topology. Thus, $N^{(c)}(t)\Rightarrow N^{(\infty)}(t)$ i.e. the marginal distributions converge at time $t$. 
Thus, when processes $Y^{(c)}$, $c\in\bN$ are stationary, by for any continuous bounded function $f:\mY\rightarrow \bR$
\begin{equation*}
 \bE f(Y^{(\infty)}(0)) = \lim_{c\rightarrow \infty}  \bE f\Big(\frac{Y^{(c)}(0)}{c}\Big) = \lim_{c\rightarrow \infty}  \bE f\Big(\frac{Y^{(c)}(t)}{c}\Big) =  \bE f(Y^{(\infty)}(t)).
\end{equation*}
The first equality holds by \eqref{Y conv insens}; the second holds by the stationarity of $Y^{(c)}$; and the third holds by the weak convergence of the marginal distributions.
This proves (\ref{Ch2: gsflm density}) gives a stationary distribution of the spinning network, and consequently, from (\ref{Ch2: N insens}) and (\ref{Ch2: N insens 2}), we see that the spinning network is insensitive.
\end{proof}


\bibliography{references}
\bibliographystyle{ims}

\end{document}